\documentclass[11pt]{article}

\usepackage{graphicx} 
\usepackage[T1]{fontenc}
\usepackage[utf8]{inputenc}
\usepackage[english]{babel}
\usepackage{newlfont}
\usepackage{graphicx} 
\usepackage{booktabs}

\usepackage{color}
\usepackage{amssymb}
\usepackage{amsmath}
\usepackage{amsthm}
\usepackage[utf8]{inputenc}
\usepackage{algorithm}
\usepackage{algpseudocode}

\usepackage{url}

\usepackage{caption}
\usepackage{subcaption}
\usepackage{verbatim}
\usepackage{multicol,multirow}

\usepackage{tikz}
\usepackage{tikz-3dplot}
\usepackage{pgfplots}
\pgfplotsset{compat=newest}

\newtheorem{theorem}{Theorem}[section]
\newtheorem{proposition}{Proposition}[section]
\newtheorem{lemma}{Lemma}[section]
\newtheorem{corollary}{Corollary}[section]
\newtheorem{definition}{Definition}[section]

\DeclareMathOperator{\Span}{span}
\DeclareMathOperator{\argmin}{argmin}
\DeclareMathOperator{\rank}{rank}
\DeclareMathOperator{\rankrel}{ReLU-rank}
\DeclareMathOperator{\ranksg}{sign-rank}
\DeclareMathOperator{\sgn}{sign}

\DeclareMathOperator{\err}{err}
\DeclareMathOperator{\diag}{diag}

\usepackage{listings}
\usepackage{xcolor}

\definecolor{codegreen}{rgb}{0,0.5,0}
\definecolor{codeblue}{rgb}{0,0,0.8}
\definecolor{codegray}{rgb}{0.5,0.5,0.5}
\definecolor{backcolor}{rgb}{0.96,0.96,0.96}

\lstdefinestyle{codestyle}{
    language=Matlab,
    backgroundcolor=\color{backcolor},
    basicstyle=\ttfamily\small,
    keywordstyle=\color{codeblue},
    commentstyle=\color{codegreen},
    stringstyle=\color{purple},
    numbers=left,
    numberstyle=\tiny\color{codegray},
    numbersep=8pt,
    frame=single,
    rulecolor=\color{black},
    breaklines=true,
    showstringspaces=false,
    tabsize=4,
    columns=fullflexible,
    keepspaces=true,
    captionpos=b,
    literate={~}{{$\sim$}}1,
    morekeywords={sdpvar,optimize,sdpsettings,value}
}
\definecolor{brightpink}{rgb}{1.0, 0.0, 0.5}

\definecolor{mygreen}{rgb}{0.0, 0.5, 0.4}
\newcommand{\gs}[1]{{\color{mygreen} \textbf{GS:} #1}}

\title{The ReLU Matrix Decomposition: NP-Hardness, Ill-Posedness, and ReLU-Rank\thanks{GS and NG acknowledge the support by the European Union (ERC consolidator, eLinoR, no 101085607). The work of GS\ was partially supported by INdAM-GNCS through Progetti di Ricerca.}}
\author{Giovanni Seraghiti\thanks{Corresponding author. University of Mons, Rue de Houdain 9, 7000 Mons, Belgium, and 
 Dipartimento di Ingegneria Industriale, Universit\`a degli Studi di Firenze, Viale Morgagni 40/44, 50134 Firenze, Italia. Member of the INdAM Research Group GNCS.}   
\and
Arnaud Vandaele\thanks{University of Mons, Rue de Houdain 9, 7000 Mons, Belgium.}
\and
Nicolas Gillis\footnotemark[3]
}

\date{}

\begin{document}

\maketitle

\begin{abstract}
The ReLU Matrix Decomposition (RMD) approximates a sparse, nonnegative $m $-by-$ n$ matrix $X$ as $X \approx \max(0, WH)$, where $W$ is an $m $-by-$ r$ matrix, $H$ is an $r $-by-$ n$ matrix, and $r$ is the rank of the decomposition. The most common formulation, referred to as Fro-RMD, minimizes the Frobenius norm of $X - \max(0, WH)$ over $W$ and $H$. We prove that Fro-RMD is NP-hard, even for binary matrices and $r = 1$; to the best of our knowledge, this is the first complexity result for RMD. We also show that Fro-RMD is ill-posed, in the sense that its infimum may not be attained. We then turn to exact RMD, that is, $X = \max(0, WH)$, and define the ReLU-rank of $X$ as the smallest $r$ for which such a decomposition exists. We interpret exact RMDs geometrically as configurations of points and hyperplanes in an $r$-dimensional space, associated with the rows of $W$ and the columns of $H$, respectively. Building on this interpretation, we derive upper bounds on the ReLU-rank of diagonal, block-diagonal, and block-sparse matrices and, under an additional structural assumption, improve the best known bound for block-diagonal matrices. Finally, we propose two formulations of exact RMD as feasibility problems: a nonconvex quadratically constrained program, which can be tackled by global solvers such as Gurobi, and a formulation based on the latent model of Saul, (``A nonlinear matrix decomposition for mining the zeros of sparse data'', SIMODS, 2022), which we tackle using alternating projections for which we establish local convergence together with asymptotic convergence rates. We compare these two approaches on some numerical examples. 
  
\end{abstract}

\noindent\textbf{Key words.} Nonlinear matrix decomposition, Low-rank matrix approximations, ReLU function, Complexity.  

\noindent\textbf{MSC Codes.}	15A23, 15A83, 68Q25, 65F22

\section{Introduction}

Nonlinear Matrix Decomposition (NMD) consists of approximating a given matrix $X \in \mathbb{R}^{m \times n}$ as $X \approx f(WH)$, where $f$ is a nonlinear function applied entrywise, that is, $[f(\Theta)]_{ij}=f(\Theta_{ij})$, $W  \in \mathbb{R}^{m \times r}$ and $H \in \mathbb{R}^{r \times n}$ with $r\ll\min(m,n)$.
The choice of the function $f$ depends on the properties of the data and the task to be achieved. Prominent examples include 
\begin{itemize}
    \item $f(x)=\sgn(x)$: the sign decomposition for binary data~\cite{delsarte1989low}, see Section~\ref{sec:exact_NMD};
    
    \item $f(x)=\max(0,x)$: the ReLU Matrix Decomposition (RMD), which is effective in handling sparse, nonnegative data~\cite{saul2022nonlinear,saul2022geometrical,seraghiti2023accelerated,gillis2025extrapolated}; 
    
    \item $f(x)=1/(1+e^{-x})$: the sigmoid decomposition, which is particularly appropriate for bounded data~\cite{Nguyen, Thayse2026};
    
    \item $f(x) = x^p$: the component-wise exponent factorization~\cite{gillis2026complexity}, with the component-wise square as a special case~\cite{lefebvre2024component}, which is closely related to the square root rank.
    It is also related to the more general Hadamard decomposition, $X\approx W_1H_1 \odot W_2 H_2$~~\cite{ciaperoni2024hadamard,gillis2026manifold}. 
    
\end{itemize}
More examples can be found in~\cite{awari2025alternating}. 
In this paper, we focus on RMD, which finds application in a variety of contexts, including dimensionality reduction~\cite{saul2022geometrical}, matrix completion with entries missing not at random~\cite{liu2024symmetric,naik2022truncated,gillis2025extrapolated}, sensor localization~\cite{gillis2025extrapolated}, neural network theory~\cite{mazumdar2019learning,goel2017reliably}, and learning permutation matrices~\cite{droge2023kissing}. 

A standard approach to compute RMDs consists of minimizing an error measure between the original matrix $X$ and $\max(0,WH)$.
A typical choice of error measure is the Frobenius error, with the resulting optimization problem dubbed Frobenius RMD (Fro-RMD):
\begin{equation}
  \min_{W,H}  \| X-\max(0,WH) \|_F^2.
  \label{eq:ReLU_NMD}
\end{equation}
Even though Fro-RMD is an intuitive and natural formulation, to the best of our knowledge, its computational complexity and well-posedness have not been studied.
We fill this gap in this paper.
Moreover, Fro-RMD is nonsmooth and nonconvex, and hence difficult to solve; thus, the majority of available algorithms for computing RMDs address an alternative formulation, dubbed Latent-RMD, where a latent variable $Z$, with $\max(0,Z)=X$, is introduced, and the Frobenius error between $Z$ and $WH$ is minimized instead.
The first and most intuitive algorithm designed for Latent-RMD is the Naive-RMD algorithm by Saul~\cite{saul2022nonlinear}. 
It consists of an alternating optimization strategy that solves one subproblem for $Z$ and one for $WH$.
Following the seminal work of Saul~\cite{saul2022nonlinear}, several papers have proposed efficient algorithms for computing RMDs; see, e.g.,~\cite{seraghiti2023accelerated,awari2025alternating,awari2024coordinate,wang2024momentum,wang2025accelerated,saul2026subzero}.
In contrast, the interpretation of RMDs has received far less attention. A first contribution in this direction is~\cite{saul2022geometrical}, which establishes a connection between RMDs and norm-preserving and neighborhood-preserving embeddings.
In this work, we focus on the geometric interpretation of exact RMD and propose an intuitive representation as a set of points and hyperplanes in the $r$-dimensional subspace spanned by the decomposition. 

The general intuition behind RMD is to exploit the sparsity of the original data matrix $X$ by finding a low-rank matrix that coincides with $X$ on its positive entries and is nonpositive elsewhere.
In this paper, given a nonnegative, sparse matrix $X$, we address the problem of finding the smallest $r$ such that $X$ admits an exact RMD of rank $r$.
We define the $\rankrel$ of a matrix as follows. 
\begin{definition}
    Let $X\in \mathbb{R}_+^{m \times n}$.
    The ReLU-rank of $X$, denoted $\rankrel(X)$, is the smallest $r$ such that there exists $\Theta\in \mathbb{R}^{m \times n}$ with $\rank(\Theta)=r$ and $X=\max(0,\Theta)$.  
    \label{def:f_rank}
\end{definition}
To our knowledge, the only contribution addressing exact RMD is the very recent paper by Saul et al.~\cite{saul2026subzero}, where they analyze several classes of matrices, including signless Laplacian matrices of sparse graphs and block diagonal matrices, providing bounds on their $\rankrel$. 
We use the proposed geometric representation of exact RMD to improve the known bound on the $\rankrel$ of block diagonal matrices; and we derive explicit upper bounds on the $\rankrel$ of general block-structured, sparse matrices.

Finally, we propose two reformulations of exact RMD as feasibility problems.
The first is a Nonconvex Quadratically Constrained Program (QCP), dubbed ReLU-QCP, that can be tackled by standard solvers, such as Gurobi.
The second consists of finding a point in the intersection of the set of matrices with rank at most $r$ and the set of matrices having the same positive entries as $X$ while being nonpositive in the other entries.
We address the second feasibility problem by using Naive-RMD, which we show can be interpreted as an alternating projection algorithm.
By using the available theory on alternating projections, we establish local convergence of Naive-RMD under the assumption that an exact decomposition exists.
We illustrate these two approaches on some numerical examples.

\paragraph*{Outline of the paper}
In Section~\ref{sec:ill_posedness}, we prove that Fro-RMD is ill-posed (Proposition~\ref{prop:ill_posed}) and NP-hard even in the rank-$1$ case (Theorem~\ref{thm:red_CUT_to_R1ReLUP}).
In Section~\ref{sec:exact_NMD}, we introduce a geometric interpretation of exact RMD as a points-hyperplanes system, which is employed to provide upper bounds on the $\rankrel$ for special classes of matrices, including diagonal (Theorem~\ref{th:relu_id}), block diagonal (Theorem~\ref{th:block_id_relu_aff}), and general block-structured (Theorem~\ref{th:kron}) matrices.
In Section~\ref{sec:algo}, we present the two feasibility problem reformulations of exact RMD. We also compare Naive-RMD and the Gurobi solver, each addressing one of the two formulations, across different classes of matrices.

\section{Ill-posedness and complexity of Fro-RMD}
\label{sec:ill_posedness}
In this section, we analyze the ill-posedness and the complexity of~Fro-RMD in~\eqref{eq:ReLU_NMD}.
Our contribution, together with~\cite[Example 1]{gillis2025extrapolated}, shows that both formulations used to compute RMDs, namely Fro-RMD and Latent-RMD, are ill-posed.

\subsection{Ill-posedness}\label{subsec:ill-pos}
To prove the ill-posedness of Fro-RMD, we show that there exists a $3 \times 3$ matrix such that the infimum of Fro-RMD is not attained when $r=2$. 
\begin{proposition}[Ill-posedness of Fro-RMD]\label{prop:ill_posed}
Let
\begin{equation*}
X=\begin{pmatrix} 1 & 1 & 1\\ 1 & 1 & 2\\ 1 & 2 & 0 \end{pmatrix}
\qquad\text{and}\qquad
F(\Theta)=\lVert X-\max(0,\Theta)\rVert_F^2 .
\end{equation*}
Then $\rankrel(X)=3$ and $\inf_{\rank(\Theta)\leq 2}F(\Theta)=0$, that is, the infimum of Fro-RMD with $r=2$ is not attained.
\end{proposition}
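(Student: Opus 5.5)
The plan is to exploit the fact that eight of the nine entries of any $\Theta$ with $\max(0,\Theta)=X$ are forced, and only the $(3,3)$ entry is free (subject to $\Theta_{33}\le 0$). Both claims then reduce to studying $\det\Theta$ as an affine function of that single free entry.

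\emph{Step 1: $\rankrel(X)=3$.} Any $\Theta$ with $\max(0,\Theta)=X$ must satisfy $\Theta_{ij}=X_{ij}$ for $(i,j)\neq(3,3)$ and $\Theta_{33}=t\le 0$. Expanding $\det\Theta$ along the third row, the coefficient of $t$ is the leading $2\times 2$ minor $\det\begin{pmatrix}1&1\\1&1\end{pmatrix}=0$. Hence $\det\Theta$ does not depend on $t$ and equals $\det X$. A direct cofactor computation gives $\det X = 1\cdot(0-4)-1\cdot(0-2)+1\cdot(2-1)=-1\neq 0$. So every such $\Theta$ has rank $3$, and no $\Theta$ of rank at most $2$ gives an exact decomposition. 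Since $\Theta=X$ itself is admissible and has rank $3$, we get $\rankrel(X)=3$. This also shows that $F(\Theta)>0$ for every $\Theta$ of rank at most $2$, so the infimum, once shown to be $0$, is not attained.

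\emph{Step 2: the infimum is $0$.} I would construct an explicit family of rank-$2$ matrices $\Theta_\delta$ that perturbs the vanishing minor, making the coefficient of the free entry nonzero. Take
\[
\Theta_\delta=\begin{pmatrix}1&1&1\\ 1&1-\delta&2\\ 1&2&t_\delta\end{pmatrix},\qquad \delta>0 \text{ small}.
\]
Then $\det\Theta_\delta=-\delta\, t_\delta+D(\delta)$, where $D(\delta)$ is the determinant with $t_\delta=0$. The same expansion as before gives $D(\delta)=-4+2+(2-(1-\delta))=-1+\delta$. Setting the determinant to zero yields $t_\delta=(\delta-1)/\delta$, which is negative for $0<\delta<1$, so $\max(0,t_\delta)=0=X_{33}$. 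Because $\Theta_\delta$ has the nonzero $2\times2$ minor $\det\begin{pmatrix}1&1\\1&2\end{pmatrix}$ (rows $1,3$, columns $1,2$), its rank is exactly $2$. Its only discrepancy with $X$ on the ReLU is the $(2,2)$ entry, so $F(\Theta_\delta)=\delta^2\to 0$.

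\emph{Main obstacle.} The only delicate point is the sign in Step 2: the perturbation must push the compensating entry $t_\delta$ to $-\infty$ rather than $+\infty$. Perturbing the $(2,2)$ entry upward would give $t_\delta=(1+\delta)/\delta>0$, which fails. I would therefore check the sign of $D(\delta)$ against the sign of the cofactor and choose the direction of the perturbation accordingly.
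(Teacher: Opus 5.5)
Your proposal is correct and matches the paper's proof essentially step for step. The forced entries give $\det\Theta=-1$ regardless of the free $(3,3)$ entry, and your family $\Theta_\delta$, with $(2,2)$ entry $1-\delta$ and $(3,3)$ entry $1-1/\delta$, is exactly the paper's $\Theta_t$, so $F(\Theta_\delta)=\delta^2\to 0$; your cofactor explanation of why the determinant ignores the free entry (its cofactor is the vanishing leading $2\times 2$ minor) makes explicit what the paper leaves as a direct check.
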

\begin{proof}
    First, we observe that $X$ cannot admit an exact RMD of $\rank$ at most 2; hence, \mbox{$\rankrel(X)=3$}. This follows from the fact that any matrix $ \Bar{\Theta}$ such that $X=\max(0, \Bar{\Theta})$ is of the form
    \begin{equation*}
   \Bar{\Theta}= \begin{pmatrix}
        1 & 1 & 1 \\
        1 & 1 & 2 \\
        1 & 2 & \alpha
    \end{pmatrix}, \quad \text{ with } \alpha \leq 0,
\end{equation*}
and has rank 3.
Indeed, one can verify that, independently of $\alpha \in \mathbb{R}$, we have $\det( \Bar{\Theta})=-1$.
Therefore, there cannot exist an exact RMD of rank less than or equal to 2. 
Next, for $t \in (0,1)$, define the family of matrices 
\begin{equation*}
   \Theta_t= \begin{pmatrix}
        1 & 1 & 1 \\
        1 & 1-t & 2 \\
        1 & 2 & 1-\frac{1}{t}
    \end{pmatrix}.
\end{equation*}
Every matrix $\Theta_t$ satisfies $\rank(\Theta_t)=2$ since $\det(\Theta_t)=0$ for every $t \in ]0,1[$ and the upper left minor has non-zero determinant. Moreover, for $t \in ]0,1[$, $1-\frac{1}{t} < 0$, hence
\begin{equation*}
    \max(0,\Theta_t)=\begin{pmatrix}
        1 & 1 & 1 \\
        1 & 1-t & 2 \\
        1 & 2 & 0
    \end{pmatrix} \quad \text{and} \quad F(\Theta_t)=\left\lVert \begin{pmatrix}
        0 & 0 & 0 \\
        0 & t & 0 \\
        0 & 0 & 0
    \end{pmatrix} \right\rVert_F^2=t^2.
\end{equation*} 
Consequently,  $\inf_{\rank(\Theta) \leq 2 }F(\Theta)=0$.
If the infimum were attained, $X$ would admit an exact RMD of rank at most 2, which we showed to be impossible.
Hence Fro-RMD is ill-posed.
\end{proof}

As a consequence of Proposition~\ref{prop:ill_posed}, we observe that  the sublevel set of Fro-RMD, that is, \newline \mbox{$\{ (W,H) \ | \ \lVert X-\max(0,WH) \rVert_F^2 \leq C \}$}, where $C$ is a positive scalar, is not compact.
The lack of compactness implies that when designing algorithms, the generated sequence of iterates $\{W_k\}_{k \in \mathbb{N}}$ and $\{H_k\}_{k \in \mathbb{N}}$ might be unbounded; that is, $\lVert W_k \rVert \to \infty$ or $\lVert H_k \rVert \to \infty$, while producing a convergent function value.
In practice, this issue can be addressed by penalizing the norms of $W$ and $H$ or by bounding their entries~\cite{wang2024momentum,wang2025accelerated}.
It does not seem to matter much in applications, since unbounded sequences appear to arise only in degenerate cases such as Proposition~\ref{prop:ill_posed}. 

\subsection{Complexity}
In this section, we prove the NP-hardness of rank-1 Fro-RMD, dubbed R1-RMD, even when the input matrix is binary.
Our proof proceeds by a polynomial time reduction from the decision version of the Cut-Norm problem. 
We begin by formally stating the decision versions of both problems.

\vspace{0.2cm}

\noindent
\fbox{\begin{minipage}{0.99\textwidth}\textbf{\textsc{R1-RMD}.}

\noindent
\emph{Given:} a matrix $X\in\{0,1\}^{m\times n}$ and $T\in\mathbb{R}_+$.

\noindent
\emph{Question:} does there exist $(w,h)\in\mathbb{R}^m\times\mathbb{R}^n$ such that $\|X-\max(0,wh^T)\|_F^2 \leq T$?
\end{minipage}}\\

\vspace{0.2cm}

\noindent
\fbox{\begin{minipage}{0.99\textwidth}\textbf{\textsc{Cut-Norm}.}

\noindent
\emph{Given:} a matrix $M\in\{\pm 1\}^{s\times t}$, and $D\in\mathbb{N}$.

\noindent
\emph{Question:} does there exist $(u,v)\in\{\pm 1\}^s\times\{\pm 1\}^t$ such that $\langle M,uv^T\rangle \geq D$?
\end{minipage}}\\

Cut-Norm is NP-hard by a polynomial-time reduction from the MAX-CUT problem~\cite[Theorem~1]{gillis2018complexity}.
Let us now reduce Cut-Norm to R1-RMD in polynomial-time. 
Given an instance $(M,D)$ of Cut-Norm, we construct an instance $(X,T)$ of R1-RMD by taking 
\begin{equation*}
    N = 3\cdot12^2s^3t^3, \quad T=\frac{st-D}{2}+N(s+t), \quad  m=s+2N, \quad n=t+2N,
\end{equation*}
and $X\in\{0,1\}^{m\times n}$ is defined as follows:
    \[
X=
\begin{pmatrix}
B & \mathbf 1_{s\times N} & \mathbf 1_{s\times N}\\[1mm]
\mathbf 1_{N\times t} & \mathbf 1_{N\times N} & \mathbf 0_{N\times N}\\[1mm]
\mathbf 1_{N\times t} & \mathbf 0_{N\times N} & \mathbf 1_{N\times N}
\end{pmatrix},
\]
where $\mathbf 1_{m \times n}$ denotes the $m $-by-$ n$ matrix of all ones, and $B=\frac{M + \mathbf 1_{s\times t}}{2}\in\{0,1\}^{s\times t}.$

Since $N$ is polynomial in $s$ and $t$, the construction is polynomial.

\begin{theorem}\label{thm:red_CUT_to_R1ReLUP}
The instance $(X,T)$ is a yes-instance of R1-RMD if and only if $(M,D)$ is a yes-instance of CUT-NORM. 
Hence R1-RMD is NP-hard. 
\end{theorem}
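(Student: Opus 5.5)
We prove the two implications separately. The gadget blocks of size $N$ force any near-optimal rank-one $wh^T$ to have a sign pattern of the form $u v^T$ on the $B$ block, and they pay a fixed cost of $N(s+t)$.

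\emph{Cut-Norm $\Rightarrow$ R1-RMD.} Assume $\langle M,uv^T\rangle\ge D$ with $u\in\{\pm1\}^s$, $v\in\{\pm1\}^t$. Write the rows of $X$ in the three groups $I_0$ (first $s$), $I_1$, $I_2$, and the columns likewise as $J_0,J_1,J_2$. Take $w=(u;\mathbf 1_N;-\mathbf 1_N)$ and $h=(v;\mathbf 1_N;-\mathbf 1_N)$.

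\begin{itemize}
\item The $(I_1,J_1)$ and $(I_2,J_2)$ blocks of $\max(0,wh^T)$ are all ones, so they are exact.
\item The $(I_1,J_2)$ and $(I_2,J_1)$ blocks are $\max(0,-1)=0$, so they are exact.
\item The $(I_0,J_1)$ block equals $\max(0,u)\mathbf 1^T$. It errs exactly on rows with $u_i=-1$, and $(I_0,J_2)$ errs exactly on rows with $u_i=+1$. Together these cost $sN$. The column blocks $(I_1,J_0)$ and $(I_2,J_0)$ likewise cost $tN$.
\item On the $B$ block, $\max(0,u_iv_j)=(1+u_iv_j)/2$ and $B_{ij}=(1+M_{ij})/2$. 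An entry errs by $1$ exactly when $M_{ij}\neq u_iv_j$, so the error there is $(st-\langle M,uv^T\rangle)/2\le (st-D)/2$.
\end{itemize}

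The total is at most $T$.

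\emph{R1-RMD $\Rightarrow$ Cut-Norm.} This is the main obstacle. Suppose $\|X-\max(0,wh^T)\|_F^2\le T$. The target is a lower bound $\|X-\max(0,wh^T)\|_F^2\ge N(s+t)+(st-\langle M,uv^T\rangle)/2-\varepsilon$ for suitable signs $u,v$ and some $\varepsilon<1/2$. Since $\langle M,uv^T\rangle$ and $D$ are integers of the same parity structure (one can also round $D$), this gives $\langle M,uv^T\rangle\ge D$.

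The argument proceeds in three steps.

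\begin{enumerate}
\item \emph{Gadget structure.} The $2N\times 2N$ lower-right block $\begin{pmatrix}\mathbf 1&\mathbf 0\\ \mathbf 0&\mathbf 1\end{pmatrix}$ must be fitted with error $O(N(s+t)+st)$, which is much smaller than $N^2$. Averaging arguments then show that for most $k\in I_1$ and $l\in J_1$ we have $w_kh_l\approx 1$. For most $k\in I_2$, $l\in J_2$ we have $w_kh_l\approx1$ with the opposite sign pattern, and $w_kh_l\le 0$ approximately on the off-diagonal blocks. Up to rescaling $w\to cw$, $h\to h/c$, this means $w\approx(\ast;\mathbf 1;-\mathbf 1)$ and $h\approx(\ast;\mathbf 1;-\mathbf 1)$ on most gadget indices.
\item \emph{Per-row cost.} For a row $i\in I_0$, the $2N$ entries in $J_1\cup J_2$ equal $1$. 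The fitted values there are roughly $\max(0,w_i)$ on $N$ entries and $\max(0,-w_i)$ on the other $N$. The cost is then about $N\big((1-w_i^+)^2+(1-w_i^-)^2\big)\ge N$, with equality only when $|w_i|=1$. Hence each row of $I_0$ costs at least $N$ minus a small slack, and its deviation $\delta_i=||w_i|-1|$ costs about $N\delta_i^2$. The same holds for columns of $J_0$.
\item \emph{Rounding the $B$ block.} The $B$ block contributes at most $st$, so $N\sum_i\delta_i^2\lesssim st$. This forces $|w_i|,|h_j|\in[1-\eta,1+\eta]$ with $\eta=O(\sqrt{st/N})$. With $u=\sgn(w_{I_0})$ and $v=\sgn(h_{J_0})$, each $B$-entry error is at least $(1-O(\eta))$ times $[M_{ij}\neq u_iv_j]$. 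This gives the lower bound up to an error of order $st\,\eta$, plus the averaging slack from step 1.
\end{enumerate}

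The constant $N=3\cdot 12^2 s^3t^3$ is presumably chosen so that all these slacks total less than $1/2$. The delicate point is step 1: making the approximate normalization of the gadget rigorous with explicit constants, in particular handling the scaling freedom and the few gadget indices that may misbehave. I would first select a single "good" pair $k\in I_1$, $l\in J_1$ with $w_kh_l$ close to $1$ to fix the scale, then bound row by row.
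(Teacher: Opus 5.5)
Your ``if'' direction is the same as the paper's. For the ``only if'' direction your outline is the right one: the gadget fixes signs, each row of $I_0$ and column of $J_0$ pays at least $N$, you round by the signs of $w$ on $I_0$ and $h$ on $J_0$, the total slack stays below $1/2$, and integrality finishes. However, the step you defer needs a missing idea, not just explicit constants.

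\textbf{Averaging cannot control all gadget indices.} Averaging over the $2N\times 2N$ gadget block cannot give what your step 2 uses. A gadget column with the wrong sign costs only about $N$ in that block, while the budget is about $N(s+t)$. So the gadget budget alone allows on the order of $s+t$ gadget rows and columns to misbehave. A single column $l\in J_2$ with $h_l>0$ already lets every row $i\in I_0$ with $w_i>0$ pay only $N-1$ on $J_1\cup J_2$. Excluding this would need a joint accounting between gadget and side blocks, which your sketch does not contain.

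The paper avoids the issue by symmetrization. For fixed $h$ the objective separates over rows, and the rows of $X$ indexed by $I_1$ are identical. So replacing every $w_k$, $k\in I_1$, by the best one does not increase the error. The same holds for $I_2$, and then, with $w$ fixed, for $J_1$ and $J_2$. This leaves $a=w(I_0)$ and $b=h(J_0)$, hence $u$ and $v$, unchanged. The gadget reduces to four scalars: $\alpha,\beta$, the common values of $w$ on $I_1,I_2$, and $\gamma,\delta$, those of $h$ on $J_1,J_2$. The gadget error becomes $N^2$ times an expression in these scalars, and the budget forces each of its four terms below $3st/N=1/(144s^2t^2)$. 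This gives exact sign information with no exceptional indices. It also gives a per-row cost $N[(1-\max(0,a_i\gamma))^2+(1-\max(0,a_i\delta))^2]\geq N$ that holds exactly rather than approximately.

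\textbf{Your normalization is false as stated.} The map $(w,h)\mapsto\max(0,wh^T)$ has two independent scalings. You may multiply the positive entries of $w$ by $c_+$ and those of $h$ by $1/c_+$, and independently the negative entries by $c_-$ and $1/c_-$, because products of opposite-sign entries stay nonpositive. So the gadget only pins down $\alpha\gamma\approx 1$ and $\beta\delta\approx 1$.

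For example, take $w=(a;\mathbf 1_N;-5\cdot\mathbf 1_N)$ and $h=(b;\mathbf 1_N;-\tfrac15\mathbf 1_N)$, with $a_i\in\{1,-5\}$ and $b_j\in\{1,-\tfrac15\}$ following the signs of $u$ and $v$. This has exactly the same error as your construction. No single $c$ in $w\to cw$, $h\to h/c$ makes $w,h\approx(\ast;\mathbf 1;-\mathbf 1)$, so your claim $|w_i|,|h_j|\in[1-\eta,1+\eta]$ in step 3 fails.

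There are two ways to fix this. You can normalize with the two-parameter scaling. Or, as the paper does, you can work only with products:
\begin{itemize}
\item the side blocks give $a_i\gamma>1-\frac{1}{12st}$ when $a_i\geq 0$ and $a_i\delta>1-\frac{1}{12st}$ when $a_i<0$, and similarly for $b_j$ with $\alpha,\beta$;
\item on entries with $M_{ij}=-1$ and $u_iv_j=1$, you bound $a_ib_j=(a_i\gamma)(b_j\alpha)/(\alpha\gamma)$, or $(a_i\delta)(b_j\beta)/(\beta\delta)$, from below by $1-\frac{1}{4st}$.
\end{itemize}
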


\begin{proof}
Let
\[
I=\{1,\dots,s\},\qquad
R=\{s+1,\dots,s+N\},\qquad
R'=\{s+N+1,\dots,s+2N\}
\]
be the indices of three row blocks, and
\[
J=\{1,\dots,t\},\qquad
C=\{t+1,\dots,t+N\},\qquad
C'=\{t+N+1,\dots,t+2N\}
\]
be the indices of three column blocks. For a vector $g$ and a matrix $G$, we denote by $g(I)$ and $G(I,J)$ the subvector and submatrix indexed by  $I$ and $I \times J$, respectively.

\paragraph{The \textit{if} part.}

\noindent
Suppose first that $(M,D)$ is a yes-instance of CUT-NORM.
Then there exist $u\in\{\pm1\}^s$ and
$v\in\{\pm1\}^t$ such that $u^TMv\geq D$.
We define $w\in\mathbb{R}^{s+2N}$ and $h\in\mathbb{R}^{t+2N}$ as follows:
\[
w=
\begin{pmatrix}
u\\
\mathbf 1_N\\
-\mathbf 1_N
\end{pmatrix}
\in\mathbb{R}^{m},
\qquad
h=
\begin{pmatrix}
v\\
\mathbf 1_N\\
-\mathbf 1_N
\end{pmatrix}
\in\mathbb{R}^{n}.
\]
With this definition of $(w,h)$, we examine the nine different parts of the matrix $X$ to show that
\[
\|X-\max(0,wh^T)\|_F^2 \leq T.
\]
\textbf{Case 1.} For $I\times J$.
    For $(i,j)\in I\times J$, we have $w_i=u_i$ and $h_j=v_j$ and since these entries belong to $\{\pm 1\}$, we have that
    \[
    \max(0,w_i h_j)=\max(0,u_iv_j) = \frac{u_iv_j+1}{2}.
    \]
    The contribution of the block $X(I,J)$ to the objective is:
\begin{eqnarray*}
\|X(I,J)-\max(0,w(I)h(J)^T)\|_F^2 & =  & \sum_{i=1}^s\sum_{j=1}^t\left(B_{i,j}-\max(0,w_ih_j)\right)^2 \\
& = &\sum_{i=1}^s\sum_{j=1}^t\left(\frac{M_{i,j}+1}{2}-\frac{u_iv_j+1}{2}\right)^2\\
& = & \frac{1}{4}\sum_{i=1}^s\sum_{j=1}^t (M_{i,j}^2-2u_iv_jM_{i,j}+(u_iv_j)^2) \\
& = & \frac{1}{4}\left(2st-2\sum_{i=1}^s\sum_{j=1}^tu_iv_jM_{i,j}\right)= \frac{st-\langle M,uv^T\rangle}{2}.
\end{eqnarray*}
    \textbf{Case 2.} For $R\times C$ and $R'\times C'$, the corresponding blocks of $\max(0,wh^\top)$ are all ones, hence
    \[\|X(R,C)-\max(0,w(R)h(C)^T)\|_F^2=\|X(R',C')-\max(0,w(R')h(C')^T)\|_F^2=0.\]
    \textbf{Case 4.} For $R\times C'$ and $R'\times C$, the corresponding blocks of $\max(0,wh^\top)$ are all zeros, hence
    \[\|X(R,C')-\max(0,w(R)h(C')^T)\|_F^2=\|X(R',C)-\max(0,w(R')h(C)^T)\|_F^2=0.\]
    \textbf{Case 5.} For $I\times(C\cup C')$, each row $i\in I$ must fit two sets of $N$ entries equal to $1$, corresponding to $C$ and $C'$.
    Since only one of $\max(0,w_i)$ and $\max(0,-w_i)$ can be active (and equal to $1$), exactly one set is matched, and the other contributes $N$ to the error:
    \begin{equation*}
        \begin{aligned}
           \|X(I,C\cup C')-\max(0,w(I)h(C\cup C')^T)\|_F^2
& =   \sum_{i=1}^s\left(\sum_{j\in C}\left(1-\max(0,w_i)\right)^2+\sum_{j\in C'}\left(1-\max(0,-w_i)\right)^2\right)\\
& =   N\sum_{i=1}^s\left(\left(1-\max(0,w_i)\right)^2+\left(1-\max(0,-w_i)\right)^2\right)\\
& =   N\sum_{i=1}^s\left(2-2|w_i|+|w_i|^2\right) =   N\sum_{i=1}^s\left(2-2+1^2\right) = Ns.
        \end{aligned}
    \end{equation*}
    \textbf{Case 6.} For $(R\cup R')\times J$, the situation is symmetric to the previous case, and we have
    \[
\|X(R\cup R',J)-\max(0,w(R\cup R')h(J)^T)\|_F^2 = Nt.
\]

Finally,
\[
\|X-\max(0,wh^T)\|_F^2
=
\frac{st-\langle M,uv^T\rangle}{2}+N(s+t)\leq \frac{st-D}{2}+N(s+t)
= T.
\]
Hence $(X,T)$ is a yes-instance of R1-RMD.

\medskip
\noindent
\textbf{The \textit{only if} part.}
Let $(X,T)$ be a yes-instance of R1-RMD, that is, there exist
$w\in\mathbb{R}^{m}$ and $h\in\mathbb{R}^{n}$ such that
\[
\|X-\max(0,wh^\top)\|_F^2\leq T.
\]
We show that this implies that $(M,D)$ is a yes-instance of \textsc{Cut-Norm}.

We first exploit the repeated rows and columns of $X$.
For a fixed $h$, all rows $X(i,:)$ with $i\in R$ are the same, hence the corresponding one-variable subproblems in the variables $w_i$, $i\in R$, are identical.
Therefore, replacing all $w_i$, $i\in R$, by the one with the smallest row error does not increase the objective; the same holds for $R'$, $C$ and $C'$. Hence, without loss of generality, we may assume that $(w,h)$ has the form
\[
w=
\begin{pmatrix}
a\\
\alpha \mathbf 1_N\\
\beta \mathbf 1_N
\end{pmatrix},
\qquad
h=
\begin{pmatrix}
b\\
\gamma \mathbf 1_N\\
\delta \mathbf 1_N
\end{pmatrix},
\]
for some $a\in\mathbb{R}^s$, $b\in\mathbb{R}^t$, and
$\alpha,\beta,\gamma,\delta\in\mathbb{R}$.

We now decompose the objective according to the four natural parts of $X$:
\[
\|X-\max(0,wh^T)\|_F^2 = E_{IJ}+E_{IR}+E_{CJ}+E_{\mathrm BR},
\]
where $E_{IJ}$ is the contribution of the top-left block $I\times J$,
$E_{IR}$ the contribution of the top-right blocks $I\times (C\cup C')$,
$E_{CJ}$ the contribution of the bottom-left blocks $(R\cup R')\times J$,
and $E_{\mathrm BR}$ the contribution of the bottom-right blocks
$(R\cup R')\times (C\cup C')$.

We also define $u\in\{\pm1\}^s$ and $v\in\{\pm1\}^t$ using the signs of $a$ and $b$:
\begin{equation}\label{eq:def_sigma_tau}
u_i=
\begin{cases}
+1,& a_i\geq 0,\\
-1,& a_i<0,
\end{cases}
\qquad
v_j=
\begin{cases}
+1,& b_j\geq 0,\\
-1,& b_j<0.
\end{cases}
\end{equation}
It remains to prove that $\langle M,uv^T\rangle\geq D$.

\medskip
\noindent\textbf{Step 1: the bottom-right blocks fix the signs of $\alpha,\beta,\gamma,\delta$.}~Since
\[ T=\frac{st-D}{2}+N(s+t)\le N(s+t)+st, \]
we have
\[
\|X-\max(0,wh^\top)\|_F^2\le N(s+t)+st.
\]
We first analyze the four $N\times N$ blocks in the bottom-right corner. Their
contribution is
\begin{eqnarray}
E_{\mathrm BR} & = &  \|X(R\cup R',C\cup C')-\max(0,w(R\cup R')h(C\cup C')^\top)\|_F^2 \notag\\
&=&
N^2\Bigl(
(1-\max(0,\alpha\gamma))^2
+\max(0,\alpha\delta)^2
+\max(0,\beta\gamma)^2
+(1-\max(0,\beta\delta))^2
\Bigr).
\label{eq:EBR}
\end{eqnarray}
Since $E_{\mathrm BR}\leq \|X-\max(0,wh^\top)\|_F^2$, we obtain
\[
\frac{E_{\mathrm BR}}{N^2} \leq \frac{N(s+t)+st}{N^2}
= \frac{s+t}{N}+\frac{st}{N^2} < \frac{s+t}{N}+\frac{st}{N} \leq \frac{2st}{N}+\frac{st}{N} = \frac{3st}{N} = \frac{1}{12^2s^2t^2}.
\]
Since all four terms in \eqref{eq:EBR} are nonnegative, each of them is strictly smaller
than $1/(12^2s^2t^2)$. Therefore,
\begin{eqnarray}
&&
\max(0,\alpha\gamma)\in\left(1-\frac{1}{12st},\,1+\frac{1}{12st}\right),
\qquad
\max(0,\beta\delta)\in\left(1-\frac{1}{12st},\,1+\frac{1}{12st}\right),
\label{eq:boundparameters}
\\
&&
\max(0,\alpha\delta)<\frac{1}{12st},
\qquad
\max(0,\beta\gamma)<\frac{1}{12st}.
\notag
\end{eqnarray}
The relations \eqref{eq:boundparameters} imply that $\alpha$ and $\gamma$ have the same
sign, and that $\beta$ and $\delta$ have the same sign. Moreover, $\alpha$ and $\delta$
must have opposite signs. Indeed, if they had the same sign, then
\[
\alpha\beta\gamma\delta=(\alpha\gamma)(\beta\delta)=(\alpha\delta)(\beta\gamma)>0,
\]
while at the same time
\[
(\alpha\delta)(\beta\gamma)<\left(\frac{1}{12st}\right)^2
\qquad\text{and}\qquad
(\alpha\gamma)(\beta\delta)>\left(1-\frac{1}{12st}\right)^2,
\]
a contradiction.
Hence $\alpha$ and $\delta$ have opposite signs, and therefore $\beta$ and $\gamma$ also have opposite signs.

Without loss of generality, we assume in the following that
\begin{equation}\label{eq:anchor_signs}
\alpha>0,\qquad \gamma>0,\qquad \beta<0,\qquad \delta<0.
\end{equation}
With these signs, \eqref{eq:EBR} simplifies to
\[
E_{\mathrm BR} = N^2\Bigl(
(1-\alpha\gamma)^2+(1-\beta\delta)^2
\Bigr),
\]
and \eqref{eq:boundparameters} gives
\begin{equation}\label{eq:parameters_interval}
\alpha\gamma,\beta\delta\in\left(1-\frac{1}{12st},\,1+\frac{1}{12st}\right).
\end{equation}

\medskip
\noindent\textbf{Step 2: the side blocks constrain the entries of $a$ and $b$.}~We next analyze the two rectangular blocks $I\times(C\cup C')$. Since $\gamma>0$ and
$\delta<0$, each row $i\in I$ can match only one of the two groups of $N$ columns,
depending on the sign of $a_i$. Therefore,
\begin{eqnarray*}
E_{IR}
& = & \|X(I,C\cup C')-\max(0,w(I)h(C\cup C')^\top)\|_F^2\\
& = & \sum_{i\in I}N\Bigl((1-\max(0,a_i\gamma))^2+(1-\max(0,a_i\delta))^2\Bigr)\\
& = & \sum_{i\in I:a_i\geq 0}N\Bigl(1+(1-a_i\gamma)^2\Bigr)
+
\sum_{i\in I:a_i<0}N\Bigl(1+(1-a_i\delta)^2\Bigr)\\
& = & Ns+\sum_{i\in I:a_i\ge0}N(1-a_i\gamma)^2+\sum_{i\in I:a_i<0}N(1-a_i\delta)^2.
\end{eqnarray*}
Similarly, for the two rectangular blocks $(R\cup R')\times J$, we obtain
\begin{eqnarray*}
E_{CJ} & = & \|X(R\cup R',J)-\max(0,w(R\cup R')h(J)^\top)\|_F^2\\
& = & Nt+\sum_{j\in J:b_j\geq 0}N(1-b_j\alpha)^2+\sum_{j\in J:b_j<0}N(1-b_j\beta)^2.
\end{eqnarray*}

Since
\[ E_{IR}+E_{CJ} \leq \|X-\max(0,wh^\top)\|_F^2 \leq N(s+t)+st,
\]
we have that
\[ \sum_{i\in I:a_i\geq 0}N(1-a_i\gamma)^2
+\sum_{i\in I:a_i<0}N(1-a_i\delta)^2
+\sum_{j\in J:b_j\geq 0}N(1-b_j\alpha)^2
+\sum_{j\in J:b_j<0}N(1-b_j\beta)^2
\leq st. \]
All terms in the left-hand side are nonnegative, hence each of them is at most $st$.
For example, when $a_i\geq 0$, we have $N(1-a_i\gamma)^2 \leq st< 3st$, which gives $|1-a_i\gamma| < 1/(12st)$.
Proceeding in the same way in all cases, we obtain
\begin{equation}\label{eq:LBparameters}
\begin{aligned}
a_i\geq 0 &\implies |1-a_i\gamma|<\frac{1}{12st}\implies a_i\gamma>1-\frac{1}{12st},\\
a_i<0 &\implies |1-a_i\delta|<\frac{1}{12st}\implies a_i\delta>1-\frac{1}{12st},\\
b_j\geq 0 &\implies |1-b_j\alpha|<\frac{1}{12st}\implies b_j\alpha>1-\frac{1}{12st},\\
b_j<0 &\implies |1-b_j\beta|<\frac{1}{12st}\implies b_j\beta>1-\frac{1}{12st}.
\end{aligned}
\end{equation}

\medskip
\noindent\textbf{Step 3: the top-left block controls the disagreements between $M$ and $uv^\top$.}~We now turn to the block $I\times J$.
We distinguish the two types of disagreements
between $M$ and $uv^\top$, together with the agreeing positive entries:
\begin{eqnarray*}
F_0&=&\{(i,j)\in I\times J:\ M_{i,j}=-1,\ u_iv_j=1\},\\
F_1&=&\{(i,j)\in I\times J:\ M_{i,j}=1,\ u_iv_j=-1\},\\
F_2&=&\{(i,j)\in I\times J:\ M_{i,j}=1,\ u_iv_j=1\}.
\end{eqnarray*}
The sets $F_0$ and $F_1$ are exactly the disagreement positions between $M$ and $uv^\top$,
hence
\begin{equation}\label{eq:disagreement_inner_product}
\langle M,uv^\top\rangle = st-2(|F_0|+|F_1|).
\end{equation}

We now compute the contribution of the block $I\times J$.
Every pair $(i,j)\in F_1$ contributes exactly $1$ to the objective, every pair $(i,j)\in F_0$ contributes $(a_ib_j)^2$, and every pair $(i,j)\in F_2$ contributes $(1-a_ib_j)^2$.
Therefore,
\begin{equation}\label{eq:EIJ_formula_final}
E_{IJ} = |F_1| +\sum_{(i,j)\in F_0}(a_ib_j)^2
+\sum_{(i,j)\in F_2}(1-a_ib_j)^2
\geq |F_1|+\sum_{(i,j)\in F_0}(a_ib_j)^2.
\end{equation}

It remains to bound $(a_ib_j)^2$ from below on $F_0$.
Suppose first that $a_i\ge0$ and $b_j\ge0$.
Using \eqref{eq:parameters_interval} and \eqref{eq:LBparameters}, we obtain
\[ (a_ib_j)^2 = \left(\frac{(a_i\gamma)(b_j\alpha)}{\alpha\gamma}\right)^2 > \left(\frac{\left(1-\frac{1}{12st}\right)^2}{1+\frac{1}{12st}}
\right)^2
\geq \left(1-\frac{3}{12st}\right)^2 = \left(1-\frac{1}{4st}\right)^2 > 1-\frac{1}{2st}, \]
since $(1-x)^2\geq (1-3x)(1+x)$ for $x\geq 0$. The case $a_i<0$ and $b_j<0$ is identical,
using instead $(a_i\delta)(b_j\beta)$ and $\beta\delta$.
Summing over all $(i,j)\in F_0$, and using $|F_0|\le st$, we get
\[ \sum_{(i,j)\in F_0}(a_ib_j)^2 > |F_0|-\frac{1}{2}. \]
Combining this with \eqref{eq:EIJ_formula_final} gives
\begin{equation}\label{eq:EIJ_clean_lower_bound}
E_{IJ}>|F_1|+|F_0|-\frac{1}{2}.
\end{equation}

\medskip
\noindent\textbf{Step 4: conclusion.}~From the previous steps, we know that $E_{IR}\geq Ns$, $E_{CJ}\geq Nt$, $E_{\mathrm BR}\geq 0$, and using \eqref{eq:EIJ_clean_lower_bound}, 
\[
\|X-\max(0,wh^\top)\|_F^2 = E_{IJ}+E_{IR}+E_{CJ}+E_{\mathrm BR} > N(s+t)+|F_1|+|F_0|-\frac{1}{2}. 
\]
On the other hand,
\[ \|X-\max(0,wh^\top)\|_F^2 \leq T = N(s+t)+\frac{st-D}{2}. \]
Therefore,
\[ |F_1|+|F_0| < \frac{st-D}{2}+\frac{1}{2}. \]
Since $|F_1|+|F_0|$ is an integer, the strict inequality above implies that
\[
|F_1|+|F_0| \leq \frac{st-D}{2}.
\]
Using \eqref{eq:disagreement_inner_product}, we conclude that
\[ \langle M,uv^T\rangle = st-2(|F_1|+|F_0|) \geq D. \]
Hence $(M,D)$ is a yes-instance of \textsc{Cut-Norm}.

\end{proof}

\section{Exact RMD: Geometry and ReLU-rank}
\label{sec:exact_NMD}

In this section, we investigate the exact RMD problem, which consists of finding two factors $W \in \mathbb{R}^{m \times r }$ and $H \in \mathbb{R}^{r \times n}$ such that $X=\max(0,WH)$, where $X$ is a given sparse and nonnegative matrix.
Among all possible solutions, we are interested in finding one that has the minimum possible rank, which is the $\rankrel(X)$ in Definition~\ref{def:f_rank}. Exact RMD is closely related to sign decomposition, which approximates a matrix with entries in $\{-1,1\}$, called a sign matrix, using the nonlinear model $\sgn(WH)$.
In the next subsection, we briefly review some properties of the sign decomposition, which serve as an introduction to our results on exact RMD.

\subsection{Sign decomposition:  geometric interpretation and bounds}

Sign decomposition finds application in a variety of contexts, such as communication complexity~\cite{paturi1986probabilistic,chattopadhyay2018separation,sherstov2011unbounded}, learning theory~\cite{ben2002limitations,klivans2007unconditional,feldman2017general}, and discrete geometry~\cite{alon1985geometrical,fox2010overlap}. Of particular interest is the sign-rank, defined as follows. 
\begin{definition}[Sign rank]
Let $S \in \{-1,1\}^{m \times n}$ be a sign matrix.
The sign-rank of $S$, denoted $\ranksg(S)$, is the smallest rank of a matrix $B \in \mathbb{R}^{m \times n}$ with nonzero entries such that $\sgn(B) = S$.
\end{definition}
Finding tight lower and upper bounds on the sign rank of a sign matrix is an active area of research in discrete linear algebra, see~\cite{alon2016sign,hatami2022lower}.  

The notion of $\ranksg$ and the geometric interpretation of the sign decomposition were introduced by Paturi and Simon~\cite{paturi1986probabilistic} in the context of communication complexity, and by Delsarte and Kamp~\cite{delsarte1989low} from the linear algebra point of view.
We briefly present the findings of Delsarte and Kamp, which focus on understanding which sign vector $s$ can be obtained from $\sgn(Wh)$ when $W$ is fixed.
Let $s \in \{-1,1\}^m$ be a sign vector.
We say that $s$ is sign-realized by $W \in \mathbb{R}^{m\times r}$ if there exists $h \in \mathbb{R}^r$ such that $s=\sgn(Wh)$.
Similarly, if $S$ is a $m$-by-$n$ sign matrix, it is sign-realized by  $W \in \mathbb{R}^{m\times r}$ if there exists  $H \in \mathbb{R}^{r\times n}$ such that $S=\sgn(WH)$.
Since it facilitates the geometric interpretation of RMD, we follow the construction from~\cite{delsarte1989low}, which considers the matrix $W$ in affine form, that is,
\begin{equation}
    W=\begin{pmatrix}
       1& \xi_1^\top\\
       \vdots & \vdots \\
       1 & \xi_m^\top
    \end{pmatrix}, \quad \text{where } \xi_i \in \mathbb{R}^{r-1}.
    \label{eq:struct_W}
\end{equation}
A decomposition where $W$ is as in~\eqref{eq:struct_W} is referred to as an affine decomposition.
For consistency, whenever the product $Wh$ is used with $W$ in affine form, we write $h=(h_0,\Bar{h})$ with $h_0 \in \mathbb{R}$ and $\Bar{h} \in \mathbb{R}^{r-1}$.
If each row of $W$ is viewed as a point in $\mathbb{R}^r$, an affine decomposition places all these points on the hyperplane $\{x\in\mathbb{R}^r : x_1=1\}$, which reduces the dimension of the representation by one.

For any sign vector $s$ such that $s=\sgn(Wh)$ with $W$ in affine form, we define the associated partition of the rows of $W$ as, 
  \begin{equation}
     \mathcal{P}_+=\{\xi_i: s_i=1\},  \qquad\mathcal{P}_-=\{\xi_i: s_i=-1\}.
     \label{eq:p+_p-_w}
\end{equation}
Using the same notation introduced in~\eqref{eq:struct_W}, the product $Wh$ can be represented by a collection of points  $\xi_i^\top$ lying in $\mathbb{R}^{r-1}$ and an affine hyperplane $\{ x \in \mathbb{R}^{r-1} \ | \ \Bar{h}^Tx=-h_0 \}$.
Let $s$ be a sign vector and $\mathcal{P}_+$ and $\mathcal{P}_-$ the associated partition.
Then $s=\sgn(Wh)$ with $W$ in affine form, if and only if the hyperplane with equation $\Bar{h}^Tx=-h_0$ separates $\mathcal{P}_+$ from $\mathcal{P}_-$ in $\mathbb{R}^{r-1}$.
Exploiting this geometric intuition, Delsarte and Kamp~\cite{delsarte1989low} proved a condition for a sign vector to be sign realized by a matrix $W$ assumed to be in affine form. 

\begin{lemma}[Theorem 2,~\cite{delsarte1989low}]
Let $s$ be a sign vector.
It is sign-realizable by the affine matrix $W$ if and only if the associated partition $\mathcal{P}_+$ and $\mathcal{P}_-$ of the rows of $W$ is such that the convex hulls of $\mathcal{P}_+$ and $\mathcal{P}_-$ are disjoint. 
\label{th:disj_conv_hull}
\end{lemma}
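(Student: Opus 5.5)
The proof reduces the statement to the separating hyperplane theorem for finite point sets in $\mathbb{R}^{r-1}$, applied through the affine representation of $Wh$. Write $W$ in affine form as in~\eqref{eq:struct_W}. Then $(Wh)_i = h_0 + \bar h^\top \xi_i$, so $s=\sgn(Wh)$ with all entries of $Wh$ nonzero means
\begin{equation*}
\bar h^\top \xi_i > -h_0 \text{ for } \xi_i\in\mathcal{P}_+, \qquad \bar h^\top \xi_i < -h_0 \text{ for } \xi_i\in\mathcal{P}_-.
\end{equation*}
In other words, $s$ is sign-realizable by $W$ if and only if some affine hyperplane strictly separates $\mathcal{P}_+$ from $\mathcal{P}_-$. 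The two directions are then handled separately.

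\emph{Necessity.} Suppose $h$ realizes $s$, and let $\ell(x)=h_0+\bar h^\top x$, which is an affine function. We have $\ell>0$ on $\mathcal{P}_+$, and since $\ell$ is affine, $\ell>0$ on every convex combination of points of $\mathcal{P}_+$, that is, on $\mathrm{conv}(\mathcal{P}_+)$. By the same argument, $\ell<0$ on $\mathrm{conv}(\mathcal{P}_-)$. Hence the two hulls are disjoint.

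\emph{Sufficiency.} Suppose $\mathrm{conv}(\mathcal{P}_+)\cap\mathrm{conv}(\mathcal{P}_-)=\emptyset$. Both hulls are convex polytopes, so they are compact. The key step is the strict separation theorem for disjoint compact convex sets, which I would prove directly.
\begin{itemize}
\item Let $(p^*,q^*)$ minimize $\|p-q\|$ over $p\in\mathrm{conv}(\mathcal{P}_+)$ and $q\in\mathrm{conv}(\mathcal{P}_-)$. The minimum exists by compactness.
\item Set $\bar h=p^*-q^*$, which is nonzero because the hulls are disjoint.
\item Set $h_0=-\bar h^\top (p^*+q^*)/2$.
\end{itemize}
The first-order optimality conditions of this projection problem give $\bar h^\top (p-p^*)\ge 0$ for all $p$ in the first hull and $\bar h^\top (q-q^*)\le 0$ for all $q$ in the second. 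Combining these with $\bar h^\top p^* - \bar h^\top q^* = \|\bar h\|^2>0$ yields
\begin{equation*}
\ell(p)\ge \tfrac12\|\bar h\|^2>0 \text{ on the first hull}, \qquad \ell(q)\le -\tfrac12\|\bar h\|^2<0 \text{ on the second}.
\end{equation*}
Therefore $h=(h_0,\bar h)$ gives $\sgn(Wh)=s$.

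Degenerate cases need a brief remark. If $\mathcal{P}_-=\emptyset$, take $h=(1,0)$; if $\mathcal{P}_+=\emptyset$, take $h=(-1,0)$. The convex hull of the empty set is empty, so the hypothesis holds trivially in these cases. Repeated points $\xi_i$ cause no issue, since a point cannot lie in both sets when the hulls are disjoint.

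The main obstacle is the sufficiency direction. Disjointness alone does not guarantee strict separation for general convex sets; it does here because the hulls are compact. The sign decomposition needs strict inequalities, since entries of $Wh$ must be nonzero, so compactness (finiteness of the point sets) is used essentially in that step.
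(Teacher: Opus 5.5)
Your proof is correct and takes the same route the paper indicates. The paper gives no proof of its own beyond citing Delsarte and Kamp and noting that $s=\sgn(Wh)$ holds exactly when the hyperplane $\bar h^\top x=-h_0$ strictly separates $\mathcal{P}_+$ from $\mathcal{P}_-$; you complete that reduction with the standard nearest-point proof of strict separation for the two compact hulls, and you also handle the empty-class cases, which the paper leaves implicit.
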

When considering a sign matrix instead of a sign vector, the sign decomposition $S=\sgn(WH)$ can be interpreted as a points-hyperplanes system in which the rows of $W$ define a collection of points in $\mathbb{R}^{r-1}$ and the columns of $H$ identify affine hyperplanes, which separate the points according to the signs of $S$. 
As an immediate consequence of Lemma~\ref{th:disj_conv_hull}, we have the following corollary.

\begin{corollary}[\cite{delsarte1989low}]
Let $S \in \{+,-\}^{m \times m}$ have positive signs on the diagonal and negative signs elsewhere.
Then $\ranksg(S) \leq 3$.
\label{th:id_sign}
\end{corollary}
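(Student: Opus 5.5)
We use the geometric interpretation with $r=3$, so the points $\xi_i$ live in $\mathbb{R}^{2}$. By Lemma~\ref{th:disj_conv_hull}, each column $j$ of $S$ is sign-realizable by an affine $W$ exactly when $\mathrm{conv}(\mathcal{P}_+)\cap\mathrm{conv}(\mathcal{P}_-)=\emptyset$. For column $j$, $\mathcal{P}_+=\{\xi_j\}$ and $\mathcal{P}_-=\{\xi_i: i\neq j\}$. It therefore suffices to choose $m$ points in $\mathbb{R}^2$ such that no point lies in the convex hull of the others, that is, $m$ points in convex position.

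\textbf{Construction.} Take the points on the unit circle,
\[
\xi_i=(\cos(2\pi i/m),\sin(2\pi i/m)),
\]
and let $W$ be the affine $m\times 3$ matrix with rows $(1,\xi_i^\top)$. Each $\xi_j$ is an extreme point of $\mathrm{conv}\{\xi_1,\dots,\xi_m\}$ because it lies on a strictly convex curve. Hence $\xi_j\notin\mathrm{conv}\{\xi_i:i\ne j\}$, and the two hulls (one of them a single point) are disjoint.

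\textbf{Explicit separating hyperplane.} For a concrete choice, use the tangent line at $\xi_j$ pulled slightly inward. Set $\bar h=\xi_j$ and $h_0=-c$ with $\cos(2\pi/m)<c<1$. Then
\[
\xi_i^\top\xi_j-c=\cos(2\pi(i-j)/m)-c
\]
is positive for $i=j$ and negative for $i\ne j$. Stacking these columns $h^{(j)}=(-c,\xi_j)$ gives $H\in\mathbb{R}^{3\times m}$ with $\sgn(WH)=S$ and all entries nonzero, so $\ranksg(S)\le 3$.

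The only point requiring care is choosing $c$ strictly between the largest off-diagonal cosine and $1$, which guarantees that no entry of $WH$ is zero. There is no real obstacle. The small cases $m\le 2$ are trivial, or are already covered by the same construction.
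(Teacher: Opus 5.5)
Your proof is correct and follows the same route as the paper: reduce via Lemma~\ref{th:disj_conv_hull} to placing $m$ points in convex position in $\mathbb{R}^2$, and take them on a circle. Your explicit columns $h^{(j)}=(-c,\xi_j)$ with $\cos(2\pi/m)<c<1$ go slightly beyond the paper's sketch, since they show directly that every entry of $WH$ is nonzero, as the definition of sign-rank requires. For $m=1$ that interval for $c$ is empty, but any $c<1$ works, consistent with your remark that this case is trivial.
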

The proof of Corollary~\ref{th:id_sign} becomes intuitive by using Lemma~\ref{th:disj_conv_hull}: each column of a diagonal sign matrix is a sign vector inducing a partition $\mathcal{P}_+$ and $\mathcal{P}_-$ of the rows of $W$ such that each  $\mathcal{P}_+$ contains a single row and $\mathcal{P}_-$ all the others.
Therefore, we need to find a set of points in the plane, representing the rows of $W$ in affine form, such that no point lies in the convex hull of the others.
For example, this property is satisfied by choosing $m$ disjoint points in a two dimensional circle. 

Moreover, Corollary~\ref{th:id_sign} can be generalized to the class of block diagonal matrices of the form
\begin{equation}
S=\begin{pmatrix}
    S^{(1)} & - & \cdots & -\\
    - & S^{(2)} & \cdots & -\\
    \vdots & \vdots& \ddots & \vdots \\
    - & - & & S^{(q)}
\end{pmatrix},
\label{eq:pat_id_sign_matrix}
\end{equation}
where each $S^{(i)}$, $i=1,\dots,q$ is a sign matrix of dimension $m_i \times n_i.$
\begin{lemma}[Theorem 4,~\cite{delsarte1989low}]
Let $S$ be a matrix as in~\eqref{eq:pat_id_sign_matrix} with blocks of the same dimensions.
Assume that each $S^{(i)}$ admits an affine sign decomposition of rank at most $k$.
Then $S$ has an affine type sign decomposition of rank not exceeding $k+1$. 
\label{th:block_diag_sign}
\end{lemma}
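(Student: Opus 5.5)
We argue geometrically, in the points–hyperplanes picture behind Lemma~\ref{th:disj_conv_hull}. For each block $i$ we are given an affine sign decomposition $S^{(i)}=\sgn(W^{(i)}H^{(i)})$ of rank at most $k$. Its rows are points $\xi^{(i)}_1,\dots,\xi^{(i)}_{m_i}\in\mathbb{R}^{k-1}$, and its columns are affine hyperplanes in $\mathbb{R}^{k-1}$, each realizing the signs of one column of $S^{(i)}$. The aim is to place all $q$ point configurations in $\mathbb{R}^{k}$ (one more dimension) together with hyperplanes so that two conditions hold. First, each hyperplane belonging to block $i$ reproduces the pattern of $S^{(i)}$ on the points of block $i$. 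Second, it puts every point of every other block on its negative side.

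\textbf{Construction.} Choose $q$ distinct points $c_1,\dots,c_q$ on a circle or parabola in a suitable plane. The natural choice is to lift along the curve $t\mapsto (t, t^2)$, using the new coordinate together with one old direction. Alternatively, one can use the fact that no $c_i$ lies in the convex hull of the others, as in Corollary~\ref{th:id_sign}. First scale every configuration $\{\xi^{(i)}_j\}$ into a tiny ball of radius $\varepsilon$; this is harmless because affine separability is invariant under affine maps. Then embed block $i$ as the points $(c_i\text{-offset},\ \varepsilon\xi^{(i)}_j)$. Concretely, I would use the points $x^{(i)}_j=(\tau_i,\ \varepsilon\xi^{(i)}_j)\in\mathbb{R}\times\mathbb{R}^{k-1}$, where $\tau_i$ is the new coordinate. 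Take the hyperplane for column $\ell$ of block $i$ to be the linear functional
\[
g(\tau,x)=\bar h^\top x/\varepsilon + h_0 - \lambda\,(\tau-\tau_i)^2 ,
\]
which unfortunately is quadratic in $\tau$, not affine. This is the first obstacle. I would handle it by placing blocks at points $p_i=(\tau_i,\tau_i^2)$ on a parabola, using two new coordinates. To stay at rank $k+1$, I would instead absorb one coordinate: put block $i$ in the slice $\tau=\tau_i$ with the parabola shift in an existing coordinate, $x^{(i)}_j=(\tau_i,\ \varepsilon\xi^{(i)}_j+\tau_i^2 e_1)$.

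\textbf{Verification.} The tangent-line functional $2\tau_i\tau - x_1$ is maximized over the lifted curve exactly at block $i$, with a gap of order $(\tau_i-\tau_{i'})^2$ relative to any other block $i'$. Define the hyperplane for column $\ell$ of block $i$ as $\Lambda\,(2\tau_i\tau - x_1-\tau_i^2)$ plus the original affine functional of $S^{(i)}$ evaluated at the rescaled local coordinate $(x-\tau_i^2e_1)/\varepsilon$. On block $i$ the first term vanishes, so the signs of $S^{(i)}$ are reproduced. On block $i'\neq i$ the first term is at most $-\Lambda(\tau_i-\tau_{i'})^2$. The second term, however, is of size $O(|\tau_i^2-\tau_{i'}^2|/\varepsilon)$, so the two parameters must be tuned by taking $\Lambda\gg 1/\varepsilon$. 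Since there are finitely many columns with bounded coefficients, some large $\Lambda$ makes every such value negative.

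\textbf{Remaining checks.} All the points have the form $(1,\tau,x)$, so the decomposition is affine of rank $1+1+(k-1)=k+1$. The hypothesis that the blocks have the same dimension is not needed, and strict (nonzero) signs are preserved. The main obstacle is the balancing in the verification step: the cross-block penalty has to dominate the perturbation caused by the shift $\tau_i^2e_1$ appearing in the other blocks' functionals, and this requires choosing $\Lambda$ only after $\varepsilon$ and the $\tau_i$ are fixed.
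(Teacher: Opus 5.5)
Your verification step fails on block $i$ itself. With the placement $x^{(i)}_j=(\tau_i,\ \varepsilon\xi^{(i)}_j+\tau_i^2e_1)$, the first coordinate of $x$ is $\varepsilon\xi^{(i)}_{j,1}+\tau_i^2$. The penalty term is therefore $2\tau_i\cdot\tau_i-(\varepsilon\xi^{(i)}_{j,1}+\tau_i^2)-\tau_i^2=-\varepsilon\xi^{(i)}_{j,1}$, not zero. So on block $i$ the functional for column $\ell$ takes the value $h_0+\bar h^\top\xi^{(i)}_j-\Lambda\varepsilon\,\xi^{(i)}_{j,1}$.

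You need $\Lambda\gg 1/\varepsilon$ to beat the cross term $\bar h_1(\tau_{i'}^2-\tau_i^2)/\varepsilon$. But then $\Lambda\varepsilon\,\xi^{(i)}_{j,1}$ is arbitrarily large compared with the fixed values $h_0+\bar h^\top\xi^{(i)}_j$. For large $\Lambda\varepsilon$ the signs on block $i$ are dictated by $-\xi^{(i)}_{j,1}$ rather than by $S^{(i)}$. If you keep $\Lambda\varepsilon$ bounded instead, the $1/\varepsilon$ cross term is of the same order as the penalty $\Lambda(\tau_i-\tau_{i'})^2$ and is no longer automatically dominated.

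The root cause is geometric. Block $i$ sits in the slice $\{\tau=\tau_i\}$, which is transverse to the level sets of $2\tau_i\tau-x_1$. A genuinely $(k-1)$-dimensional configuration in that slice cannot lie on one level set; that would force $\xi^{(i)}_{j,1}$ to be independent of $j$, i.e.\ a configuration of lower affine dimension.

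The missing idea is to place each block \emph{inside} the zero set of its own penalty. The tangent hyperplane $T_i=\{(\tau,x):2\tau_i\tau-x_1=\tau_i^2\}$ at $p_i=(\tau_i,\tau_i^2e_1)$ is an affine copy of $\mathbb{R}^{k-1}$. So set $x^{(i)}_j=p_i+\varepsilon A_i\xi^{(i)}_j$, with $A_i$ a linear isomorphism of $\mathbb{R}^{k-1}$ onto the direction space of $T_i$. For column $\ell$, use $\Lambda(2\tau_i\tau-x_1-\tau_i^2)+h_0+\bar h^\top B_i(y-p_i)/\varepsilon$, where $y=(\tau,x)$ and $B_i$ is any left inverse of $A_i$. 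Then:
\begin{itemize}
\item the penalty vanishes identically on block $i$, where the second term equals $h_0+\bar h^\top\xi^{(i)}_j$;
\item the penalty is at most $-(\tau_i-\tau_{i'})^2+O(\varepsilon)<0$ on block $i'$, once $\varepsilon$ is fixed small;
\item the second term is bounded on the finitely many points.
\end{itemize}
Hence a large $\Lambda$ finishes the argument with an affine decomposition of rank $k+1$.

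This is exactly how the paper argues for its ReLU analogue, Theorem~\ref{th:block_id_relu_aff}; the present lemma is only cited from Delsarte and Kamp. There, each block is embedded in the hyperplane $\mathcal{D}^k$ tangent to a sphere at $c_k$. Rotating $\mathcal{S}^k_j$ about $\mathcal{L}^k_j$ towards $\mathcal{D}^k$ amounts to adding a large positive multiple of $v_k^\top x-R$, which vanishes on block $k$ and is at most $-\delta_1$ on all other blocks.

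Note also that your shift along $e_1\in\mathbb{R}^{k-1}$ requires $k\ge 2$, and some such restriction is unavoidable. For $k=1$, three $1\times 1$ blocks $(+)$ give the $3\times 3$ diagonal sign pattern. This has no affine decomposition of rank $2$, because a threshold on a line cannot isolate the middle point. The paper's ReLU version correspondingly assumes $r_k\ge 2$.
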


Lemma~\ref{th:block_diag_sign} is surprising since the bound on the rank does not depend on the number of blocks.
The proof of Lemma~\ref{th:block_diag_sign} along with more results on sign decomposition of sign matrices can be found in~\cite{delsarte1989low}. 
We now extend the theory of exact sign decomposition to the case in which the nonlinearity is given by the ReLU function.


\subsection{ReLU nonlinearity: geometric interpretation and bounds on the ReLU-rank}

A first lower bound on the $\rankrel$ of a given matrix $X$ was proposed recently in~\cite{saul2026subzero}: it is given by the rank of any submatrix containing only positive entries, that is, for any subsets $I$ and $J$ of rows and columns, respectively, such that $X(I,J)>0$, we have
\begin{equation}
\rankrel(X) \geq \rank(X(I,J)). 
\label{eq:saul_bound}
\end{equation}
The bound follows from the fact that any exact RMD of $X$ needs to preserve all its positive entries, including the positive submatrices.
We propose another lower bound that is a straightforward consequence of the connection between RMD and sign decomposition. 
\begin{definition}
Let $X \in \mathbb{R}^{m \times n}$ be a nonnegative matrix.
We denote by $S_X$ the matrix containing a $1$ where $X$ is positive and a $-1$ where $X$ is zero.
We refer to $S_X$ as the sign pattern of $X$. 
\end{definition}
The following lemma connects the $\rankrel$ of $X$ with the $\ranksg$ of its sign pattern $S_X$. 

\begin{lemma}
Let $X \in \mathbb{R}^{m \times n}$ be a nonnegative matrix and $S_X$ its sign pattern.
Then, 
\begin{equation}
\rankrel(X) \geq \ranksg(S_X) - 1 .
\label{eq:rank_bound}
\end{equation}
\label{lem:RMD_sign_patt}
\end{lemma}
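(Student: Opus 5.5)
Let $r=\rankrel(X)$ and fix an exact decomposition $X=\max(0,\Theta)$ with $\rank(\Theta)=r$. The plan is to build from $\Theta$ a matrix of rank at most $r+1$ with no zero entries whose sign pattern is $S_X$. Then $\ranksg(S_X)\le r+1$, which is \eqref{eq:rank_bound}.

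The only obstruction to using $\Theta$ directly is that it may have zero entries: where $X_{ij}=0$ we only know $\Theta_{ij}\le 0$. Where $X_{ij}>0$ we have $\Theta_{ij}=X_{ij}>0$.

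\begin{enumerate}
\item Set $\varepsilon=\tfrac12\min\{X_{ij}: X_{ij}>0\}$. This is positive as long as $X$ has at least one positive entry. If $X=0$, then $S_X=-\mathbf 1_{m\times n}$ has sign-rank $1$ and the bound is trivial.
\item Define $B=\Theta-\varepsilon\,\mathbf 1_{m\times n}$.
\item Check the signs of $B$:
\begin{itemize}
\item where $X_{ij}>0$, $B_{ij}=X_{ij}-\varepsilon\ge\varepsilon>0$;
\item where $X_{ij}=0$, $B_{ij}=\Theta_{ij}-\varepsilon\le-\varepsilon<0$.
\end{itemize}
So $B$ has no zero entries and $\sgn(B)=S_X$.
\item Since $\mathbf 1_{m\times n}$ has rank one, subadditivity of rank gives $\rank(B)\le r+1$.
\end{enumerate}

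Hence $\ranksg(S_X)\le\rank(B)\le\rankrel(X)+1$.

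There is no real obstacle here. The one point needing care is that the shift must be strictly positive yet small enough to keep the positive entries positive, and $\varepsilon$ is chosen to do exactly that. In the paper's geometric language, the shift is the same as appending a constant coordinate to $W$, i.e., passing to an affine form.
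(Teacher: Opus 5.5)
Your proof is correct and is essentially the paper's argument: the paper also subtracts $\varepsilon\mathbf 1_m\mathbf 1_n^\top$ from an exact decomposition, with $\varepsilon$ below the smallest positive entry of $X$, and additionally treats separately the case where $\Theta$ has no zero entries, where it gets the stronger bound $\rankrel(X)\ge\ranksg(S_X)$. Applying the shift in every case is enough for the stated bound, and your explicit treatment of $X=0$, where the minimum is undefined and the paper says nothing, is slightly tidier.
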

\begin{proof}
Let $X=\max(0,WH)$ be an RMD of $X$ of rank $ \rankrel(X)$, we distinguish two cases. First, assume $(WH)_{i,j}\neq0$ for every $i=1,\dots,m$ and $j=1,\dots,n$. Then any exact decomposition $X=\max(0,WH)$ must also satisfy $S_X=\sgn(WH)$, which implies $\rankrel(X) \geq \ranksg(S_X)$.

If instead $(WH)_{i,j}=0$ for some $(i,j)$, then let $\mathcal{Q}=\{(i,j) : X_{i,j}>0\}$, fix $\delta=\min_{ (i,j) \in \mathcal{Q} } X_{i,j}$, and take $\varepsilon \in (0, \delta)$. Consider $E = WH - \varepsilon \mathbf{1}_m\mathbf{1}_n^T$, which has rank at most $ \rankrel(X)+1$ and $\sgn(E)=S_X$; thus
\begin{equation*}
\ranksg(S_X) \leq \rank(E) \leq \rankrel(X)+1.
\end{equation*}
\end{proof}
Lemma \ref{lem:RMD_sign_patt} shows that any exact RMD yields an exact sign decomposition for the corresponding sign pattern. Note that the converse is not true since sign decomposition does not preserve the positive entries of $X$.
However, the gap between the two ranks might be large.
If $X$ is positive and full-rank, then $\ranksg(S_X)=1$ while $\rankrel(X)=\min(m,n)$. Conversely, if $X$ is binary, every positive submatrix has rank one, so~\eqref{eq:saul_bound} gives 1, whereas $\ranksg(S_X)$ can be large.
Hence there is no hierarchy between the bounds in~\eqref{eq:saul_bound} and~\eqref{eq:rank_bound}, their values depend on the sparsity and the distribution of the zeros in the matrix.

\paragraph{Geometric intuition} Similarly to the sign decomposition case, we say that a vector $y \in \mathbb{R}_+^m$ (a matrix $Y \in \mathbb{R}_+^{m \times n} $) is ReLU-realized by a matrix $W$ if there exists a vector $h$ (a matrix $H \in \mathbb{R}^{r \times n} $) such that $y=\max(0,Wh)$ ($Y=\max(0,WH)$).
We now address the problem of understanding which vectors can be ReLU-realized by a fixed matrix $W$, which is assumed to be in affine form as in~\eqref{eq:struct_W}.
In order to do so, we define the signed distance of a point $\xi$ from a hyperplane $\gamma=\{ x \in \mathbb{R}^{r-1} \ | \  \Bar{h}^\top x=-h_0 \}$ with $\Bar{h} \neq 0 $ as
\begin{equation}
     d(\gamma,\xi)=\frac{h_0+\xi^\top \Bar{h}}{\lVert \Bar{h} \rVert}. 
     \label{eq:signed_dist}
\end{equation}

\begin{lemma}
    Let $y$ be a nonnegative vector such that $y\neq c\,\mathbf{1}$ for every
    $c>0$, and $W$ be in affine form with rows $(1,\xi_i^\top)$. Let also $\mathcal{P}_+=\{\xi_i: y_i>0\}$
    and $\mathcal{P}_-=\{\xi_i: y_i=0\}$ be the partition
    induced by the sign pattern of $y$. Then $y$ is ReLU-realizable by $W$ as
    in~\eqref{eq:struct_W} if and only if there exists a hyperplane
    $\gamma=\{x\in\mathbb{R}^{r-1} \mid \bar h^\top x=-h_0\}$, with
    $\bar h\neq 0$, that separates the convex hulls of
    $\mathcal{P}_+$ and $\mathcal{P}_-$, and such that
    \begin{equation}
        d(\gamma,\xi_i)\,\lVert \bar h\rVert = \xi_i^\top \bar h + h_0 = y_i
        \quad \text{for every } i \text{ such that } y_i>0.
        \label{eq:cond_relu}
    \end{equation}
    \label{lem:ext_conv_hull}
\end{lemma}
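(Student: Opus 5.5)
The plan is to prove both directions directly from the definition of ReLU-realizability, writing $Wh$ componentwise as $(Wh)_i = h_0 + \xi_i^\top \bar h$ for $h=(h_0,\bar h)$. The key observation is that, for any $h$, $y=\max(0,Wh)$ holds if and only if two conditions are met. First, $h_0+\xi_i^\top\bar h = y_i$ for every $i$ with $y_i>0$. Second, $h_0+\xi_i^\top\bar h\le 0$ for every $i$ with $y_i=0$. Everything then reduces to translating these two conditions into the geometric language of separation by the affine hyperplane $\gamma$, using the signed distance~\eqref{eq:signed_dist}.

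\emph{Only if.} Suppose $y=\max(0,Wh)$ for some $h=(h_0,\bar h)$. Condition~\eqref{eq:cond_relu} is then immediate from the first condition above. The main task is to show $\bar h\neq 0$, and this is exactly where the hypothesis $y\neq c\mathbf 1$ is used. If $\bar h=0$, then $Wh=h_0\mathbf 1$, so $y=\max(0,h_0)\mathbf 1$. This forces either $y=c\mathbf 1$ with $c=h_0>0$, which is excluded, or $y=0$. The case $y=0$ ($\mathcal P_+=\emptyset$) is degenerate. There I would simply replace $h$ by $(-1,0,\dots,0,1)$ suitably shifted: choose any $\bar h\neq 0$ and $h_0 < -\max_i \xi_i^\top\bar h$, so that $Wh<0$. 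Once $\bar h\neq 0$, the hyperplane $\gamma$ has $\xi_i^\top\bar h+h_0>0$ on $\mathcal P_+$ and $\le 0$ on $\mathcal P_-$. By linearity, the same inequalities hold on the convex hulls, since a convex combination of points satisfying a linear inequality satisfies it too. So $\gamma$ separates $\operatorname{conv}\mathcal P_+$ from $\operatorname{conv}\mathcal P_-$, with $\mathcal P_+$ strictly on the positive side.

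\emph{If.} Given $\gamma$ with $\bar h\neq0$ satisfying~\eqref{eq:cond_relu} and separating the hulls, set $h=(h_0,\bar h)$. On $\mathcal P_+$ we have $(Wh)_i=y_i>0$, so $\max(0,(Wh)_i)=y_i$. For $\mathcal P_-$ we need $(Wh)_i\le 0$. Separation gives the points of $\mathcal P_-$ on the opposite closed side from $\mathcal P_+$. Since $\mathcal P_+$ lies on the side where $h_0+\xi^\top\bar h>0$ (by~\eqref{eq:cond_relu}), $\mathcal P_-$ lies where $h_0+\xi^\top\bar h\le 0$, giving $\max(0,(Wh)_i)=0=y_i$. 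If $\mathcal P_+=\emptyset$, the orientation is ambiguous; I would fix it by flipping the sign of $(h_0,\bar h)$ if necessary.

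The main obstacle is bookkeeping rather than depth. One must fix the convention that ``separates'' means weak separation, with $\mathcal P_-$ allowed on $\gamma$, and make the orientation of $\gamma$ consistent: the positive side is determined by~\eqref{eq:cond_relu}. One must also handle the edge case $y=0$, where I would note that realizability is trivial and any hyperplane with all points on its nonpositive side works. The exclusion $y\neq c\mathbf 1$ is precisely what rules out realizations that need $\bar h=0$.
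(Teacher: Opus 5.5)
Your proof is correct and follows the same route as the paper. Both directions reduce to the equivalence between $y=\max(0,Wh)$ and the two conditions $h_0+\xi_i^\top\bar h=y_i$ on $\mathcal P_+$ and $h_0+\xi_i^\top\bar h\le 0$ on $\mathcal P_-$, read geometrically through the signed distance; you are more careful than the paper, which simply assumes $\hat h\neq 0$ in the ``only if'' direction, whereas you derive it from $y\neq c\,\mathbf 1$ and treat the degenerate case $y=0$ and the orientation of $\gamma$ explicitly (the phrase about $(-1,0,\dots,0,1)$ ``suitably shifted'' is garbled, but the explicit choice $\bar h\neq 0$, $h_0<-\max_i\xi_i^\top\bar h$ that follows it is fine).
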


\begin{proof}
Let $y$ be such that $y=\max\Big(0,W \begin{pmatrix}
    \hat{h}_0 \\
    \hat{h}
\end{pmatrix}\Big)$, with $W$ in affine form and $\hat{h}\neq0$, which implies $y_i=\xi_i^T \Hat{h}+\Hat{h}_0 >0$ for $i$ such that $y_i>0$ and $y_i=\xi_i^T \Hat{h}+\Hat{h}_0 \leq0$ for $i$ such that $y_i=0$. Then the hyperplane \newline \mbox{$\Bar{\gamma}=\{ x \in \mathbb{R}^{r-1} \ | \  \hat{h}^\top x=-\hat{h}_0 \}$} separates the convex hull of $\mathcal{P}_+$ and $\mathcal{P}_-$. Moreover, by the definition of signed distance in~\eqref{eq:signed_dist}, it also satisfies~\eqref{eq:cond_relu}. Conversely, let \mbox{$\gamma= \{ x \in \mathbb{R}^{r-1} \ | \  \Bar{h}^\top x=-h_0 \}$}, with $\Bar{h}$ non zero, separating $\mathcal{P}_+$ and $\mathcal{P}_-$ and satisfying~\eqref{eq:cond_relu}. Using~\eqref{eq:cond_relu}, we have $y_i=h_0+\xi_i^\top \Bar{h}=(Wh)_i$ for every $i  $ such that $y_i>0$, with $h=(h_0,\Bar{h})$. Moreover, $\gamma$ separates $\mathcal{P}_+$ and $\mathcal{P}_-$; therefore, it must be $(Wh)_i \leq 0$ for every $i  $ such that $y_i=0$. Combining these two observations, we obtain $y=\max(0,Wh)$.
\end{proof}
Lemma~\ref{lem:ext_conv_hull} does not cover the case where $y=c\,\mathbf{1}$ with $c>0$, which corresponds to a degenerate hyperplane with $\Bar{h}=0$. However, in that case $y$ is always ReLU-realizable by $W$, by choosing $h_0=c$ and $\bar h=0$. 

Lemma~\ref{lem:ext_conv_hull} shows that the points-hyperplanes interpretation introduced for the sign decomposition extends to RMD.
Nevertheless, RMD imposes a relationship between the positive entries of the original matrix and the distance of the hyperplane to the points associated with these entries in the RMD representation. Therefore, it results in a more restrictive points-hyperplanes system. 
In the exact RMD case $X=\max(0,WH)$, the dependence on $\lVert\bar h\rVert$ in~\eqref{eq:cond_relu} cannot be removed: normalizing the column of $H$ can only be achieved by multiplying $\max(0,WH)$ on the right with a positive diagonal matrix and thus cannot be compensated without modifying the values in the original matrix $X$.

\paragraph{Bounds on sparse, structured matrices} 

We now use the geometric interpretation of RMD to provide some bounds on the $\rankrel$ of special classes of matrices, namely 
diagonal matrices (Theorem~\ref{th:relu_id}), 
block diagonal matrices (Theorem~\ref{th:block_id_relu_aff}), 
Kronecker product of two matrices (Theorem~\ref{th:kron}), 
and sum of Kronecker product of two matrices (Theorem~\ref{th:sumKrosparse}).

The first example is the class of diagonal matrices, for which we derive a bound by extending the result on the $\ranksg$ of diagonal sign matrices in Corollary~\ref{th:id_sign}.
This example was already considered in~\cite{saul2022nonlinear,saul2026subzero}, where a constructive proof is proposed.
Our proof is much shorter and straightforward as it follows directly from the connection with the sign decomposition.

\begin{theorem}
    Let $X=\diag(d_1, \dots, d_m)$ be a diagonal, nonnegative matrix with positive diagonal elements $d_i \geq 0$. Then, 
    \begin{equation}
        \rankrel(X) \leq 3,
    \end{equation}
    \label{th:relu_id}
    and $X$ admits an exact rank-3 RMD in affine form. 
\end{theorem}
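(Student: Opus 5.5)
The plan is to build an explicit affine rank-3 decomposition $X=\max(0,WH)$ with $W\in\mathbb{R}^{m\times 3}$ and $H\in\mathbb{R}^{3\times m}$. Following the proof idea of Corollary~\ref{th:id_sign}, each row of $W$ will be a point on a circle in $\mathbb{R}^2$. Each column $j$ must then be realized by an affine line that isolates point $j$ and sits at exactly the right height there. Lemma~\ref{lem:ext_conv_hull} is the tool: for column $j$ we have $\mathcal{P}_+=\{\xi_j\}$ and $\mathcal{P}_-=\{\xi_i\}_{i\neq j}$. So it suffices to find a line separating $\xi_j$ from the other points with $\xi_j^\top\bar h+h_0=d_j$, and such that $\xi_i^\top\bar h+h_0\le 0$ for $i\ne j$.

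First I set up the construction. Take distinct angles $\theta_1,\dots,\theta_m$ and put $\xi_i=(\cos\theta_i,\sin\theta_i)$, so that the rows of $W$ are $(1,\xi_i^\top)$. For column $j$ I use the tangent-type hyperplane with normal $\bar h^{(j)}=c_j\xi_j$ and offset $h_0^{(j)}=-c_j\tau$, where $\tau\in(0,1)$ and $c_j>0$ are scalars. Then
\[
(WH)_{ij}=c_j(\xi_i^\top\xi_j-\tau)=c_j(\cos(\theta_i-\theta_j)-\tau).
\]
For $i=j$ this equals $c_j(1-\tau)>0$. For $i\neq j$, choose $\tau\ge\max_{i\neq j}\cos(\theta_i-\theta_j)$; this maximum is strictly less than $1$ because the points are distinct. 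That choice makes every off-diagonal entry nonpositive. Finally set $c_j=d_j/(1-\tau)$, which gives $(WH)_{jj}=d_j$ exactly.

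Next I handle the details. If some $d_j=0$ (the statement writes $d_i\ge 0$), take $c_j=0$, i.e.\ a zero column of $H$, which still yields a zero column after the ReLU. The construction therefore covers that degenerate case too, even though the lemma's hypothesis $\bar h\neq0$ fails there; I just verify it directly. It remains to note that $H$ has three rows, namely $h_0^{(j)}$ and the two coordinates of $\bar h^{(j)}$, so $\rank(WH)\le 3$ and the decomposition is in affine form. If the exact rank is smaller, $\rankrel(X)$ is only smaller, so $\rankrel(X)\le 3$ follows.

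The only real obstacle is the scaling constraint~\eqref{eq:cond_relu}: a separating line alone is not enough, because the value at $\xi_j$ must equal $d_j$. The construction above handles it, since scaling $(h_0,\bar h)$ by $c_j>0$ changes the value at $\xi_j$ linearly without moving the line. So separation and magnitude decouple for a single positive entry per column. I would remark that this decoupling fails when a column has several positive entries, which is why the later block results need more care.
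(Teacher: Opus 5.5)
Your proof is correct and follows the paper's argument. You realize the diagonal sign pattern with affine points on a circle and then rescale each column of $H$ by a positive factor so that its single positive entry equals $d_j$; the only difference is that you write the separating lines explicitly as $c_j(\xi_i^\top\xi_j-\tau)$, whereas the paper obtains them by citing Corollary~\ref{th:id_sign}, and you additionally handle the degenerate case $d_j=0$.
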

\begin{proof}
    From Corollary~\ref{th:id_sign} we know that there exists an $m \times 3$ matrix $W$ that can realize the sign matrix containing 1 on the diagonal and -1 elsewhere, which also represents the sign pattern of $X$.
    Therefore, for every $i=1,\dots,m$ there exists a hyperplane $\gamma_i: \{x \in \mathbb{R}^2 \ | \  \Bar{h}_{i}x=-h_{0,i}\}$ separating $\xi_i$ from all $\xi_j$ for $j \neq i$.
    Thus, it holds $\xi_i^\top \Bar{h}_i + h_{0,i}=\alpha_i >0$ and $\xi_j^T \Bar{h}_i + h_{0,i}<0$ for every $j \neq i.$
    Let $H$ be the matrix containing all the $(h_{0,i},\Bar{h}_i)$ as columns.
    Multiplying each column by a factor $\frac{d_i}{\alpha_i}$, we obtain a new matrix $\Bar{H}$ such that
    \begin{equation}
        W\Bar{H}=W \left[ \frac{d_1}{\alpha_1}\Bar{h}_1, \dots, \frac{d_m}{\alpha_m}\Bar{h}_m \right] = \begin{pmatrix}
             d_1 & - & \cdots & -\\
    - & d_2 & \cdots & -\\
    \vdots & \vdots& \ddots & \vdots \\
    - & - & & d_m
        \end{pmatrix};
        \label{eq:ReLU_theta_id}
    \end{equation}
   thus, proving the theorem. 
\end{proof}

Note that rescaling the matrix $H$ is possible because only one positive element per column needs to be exactly matched by the RMD; thus, there is no ambiguity in how to rescale the columns of $H$. 

Let us generalize Theorem~\ref{th:relu_id} to block diagonal matrices of the form
\begin{equation}
X=\begin{pmatrix}
    X^{(1)} & 0 & \cdots & 0\\
    0 & X^{(2)} & \cdots & 0\\
    \vdots & \vdots& \ddots & \vdots \\
    0 &0 & \cdots & X^{(q)}
\end{pmatrix}.
\label{eq:pat_id_relu_matrix}
\end{equation}
Let $\bar{m}$ and $\bar{n}$ denote the maximum row and column dimensions, respectively, among all the blocks. Saul et al. proved in~\cite[Lemma 1]{saul2026subzero} that $\rankrel(X)\leq 3\min(\Bar{m},\Bar{n})$. Following the intuition from Lemma~\ref{th:block_diag_sign}, we show in what follows that, if each block admits an affine RMD of rank $r_k$, then $\rankrel(X)\leq 1+\max_k r_k \leq 1+\min(\Bar{m},\Bar{n})$, which improves the known bound on the $\rankrel$ of block diagonal matrices.

\begin{theorem}
   Let $X$ be as in (\ref{eq:pat_id_relu_matrix}), such that each block has dimensions $m_k \times n_k$ and $\rankrel(X^{(k)})=r_k \geq 2$, with $X^{(k)}=\max(0,\Bar{W}^{(k)} \Bar{H}^{(k)})$ and $\Bar{W}^{(k)}$ in affine form. Then, $X$ admits an exact RMD in affine form and
   \begin{equation}
      \rankrel(X)\leq\max_k r_k +1.
    \end{equation}
    \label{th:block_id_relu_aff}
\end{theorem}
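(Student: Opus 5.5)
The plan is to mimic the construction behind Lemma~\ref{th:block_diag_sign}: we keep each block's affine decomposition and place the $q$ blocks on different "sheets" of an $(r+1)$-dimensional affine space, where $r=\max_k r_k$. The extra coordinate is used to separate the blocks from one another.

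\textbf{Step 1: normalize the block decompositions.} First we pad each $\bar W^{(k)}$ with zero columns and $\bar H^{(k)}$ with zero rows, so that every block has an affine decomposition of the same rank $r$. Write the rows of $\bar W^{(k)}$ as $(1,\xi^{(k)\top}_i)$ with $\xi^{(k)}_i\in\mathbb{R}^{r-1}$. Write the columns of $\bar H^{(k)}$ as $(h^{(k)}_{0,j},\bar h^{(k)}_j)$. Then
\[
y^{(k)}_{ij}:=h^{(k)}_{0,j}+\xi^{(k)\top}_i\bar h^{(k)}_j
\]
satisfies $\max(0,y^{(k)}_{ij})=X^{(k)}_{ij}$. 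Let $\rho$ be a common bound on $\|\xi^{(k)}_i\|$ and $L$ a common bound on $|y^{(k)}_{ij}|$ over all blocks.

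\textbf{Step 2: lift the points.} Choose $q$ distinct points $c_1,\dots,c_q$ on a circle. We map the points of block $k$ into $\mathbb{R}^{r}$ by appending one extra coordinate, $\xi^{(k)}_i\mapsto(\xi^{(k)}_i,t_k)$. A single extra coordinate is not enough to separate each block from all the others linearly, since the $t_k$ are ordered. I would therefore use a quadratic-type lifting in the spirit of the Delsarte--Kamp proof: points on the parabola $t\mapsto(t,t^2)$. The linear functional $\ell_k(t)=-(t-t_k)^2=-t^2+2t_kt-t_k^2$ is zero at $t_k$ and at most $-\delta^2$ at every other $t_l$, where $\delta=\min_{k\neq l}|t_k-t_l|$.

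To keep the total rank at $r+1$, I would fold the $t^2$ term into the constant. Take $t_k$ on a circle, $\|t_k\|=1$, in a two-dimensional coordinate, and replace $-\|t-t_k\|^2$ by $2t_k^\top t-2$. This is linear in $t$ because $\|t\|^2=1$ is constant on the circle. A circle, however, costs two extra coordinates. To cost only one extra dimension, I would reuse one existing coordinate as in Lemma~\ref{th:block_diag_sign}. Alternatively, if that proves too delicate, I would accept the bound $r+1$ by using the padded zero coordinate; the statement's $r_k\ge2$ hypothesis hints that such a reuse is available. I expect this to be the main obstacle, and I would follow Delsarte--Kamp's proof of Theorem~4 exactly for the lifting.

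\textbf{Step 3: build the columns and check.} For column $j$ of block $k$, define the hyperplane
\[
h=\bigl(h^{(k)}_{0,j},\,\bar h^{(k)}_j,\,0\bigr)+M\,g_k,
\]
where $g_k$ is the separating functional from Step 2 for sheet $k$, scaled so that $g_k=0$ on sheet $k$ and $g_k\le-\delta'$ on the other sheets. Then the following hold.
\begin{itemize}
\item On the rows of block $k$ the value is exactly $y^{(k)}_{ij}$, so these entries are reproduced.
\item On the rows of block $l\neq k$ the value is at most $|h^{(k)}_{0,j}|+\rho\|\bar h^{(k)}_j\|-M\delta'$.
\end{itemize}
The second quantity is negative once $M$ is large enough, and finitely many columns means a single $M$ suffices. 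Hence the off-diagonal blocks of $\max(0,WH)$ are zero, and the diagonal blocks equal $X^{(k)}$, so $X=\max(0,WH)$ in affine form with rank $\max_k r_k+1$. This is the same rescaling-free argument as in Theorem~\ref{th:relu_id}, except that no column scaling is needed because $g_k$ vanishes on its own sheet.
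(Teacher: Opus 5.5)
Your Step 3 is sound. It is in fact a cleaner form of the paper's rotation argument: dividing the paper's rotated functional by $d_j(\theta_j)$ gives $(z_j^k)^\top x+a_j^k+\mu\,(v_k^\top x-R)$, with $\mu\to+\infty$ as $\theta_j\to\theta_{\min}$. That is exactly ``original functional $+\,M g_k$''.

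The genuine gap is Step 2, which is the real content of the theorem and which you leave unresolved. Step 3 needs a value-preserving affine embedding of every block's points into $\mathbb{R}^{r}$, with $r=\max_k r_k$. Block $k$ must lie on a sheet where some affine $g_k$ vanishes, while $g_k\le-\delta'<0$ on all other blocks. None of your options delivers this:
\begin{itemize}
\item Appending one coordinate $(\xi,t_k)$ gives parallel sheets, and, as you note, a middle sheet cannot be separated from both sides.
\item The circle lifting works, but the points then live in $\mathbb{R}^{r-1}\times\mathbb{R}^2$. So it only proves $\rankrel(X)\le\max_k r_k+2$.
\item The ``padded zero coordinate'' does not exist for a block attaining $\max_k r_k$, and does not exist at all if all $r_k$ are equal.
\item ``Follow Delsarte--Kamp exactly'' is a deferral, not an argument.
\end{itemize}
The hypothesis $r_k\ge 2$ also plays a different role from the one you guess (see below).

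The missing idea is to abandon the product structure, in which all blocks share the coordinates $\mathbb{R}^{r-1}$ and separation must happen in extra coordinates. Instead, rigidly rotate each block onto a different tangent hyperplane of a large sphere in $\mathbb{R}^{r}$.
\begin{itemize}
\item Choose distinct unit vectors $v_1,\dots,v_q\in\mathbb{R}^{r}$. This is where $r\ge2$ is needed: the unit sphere of $\mathbb{R}^1$ has only two points.
\item Let $Q_k\in\mathbb{R}^{r\times(r-1)}$ have orthonormal columns spanning $v_k^\perp$, and set $w_i^{(k)}=Rv_k+Q_k\xi_i^{(k)}$.
\item With $g_k(x)=v_k^\top x-R$, you get $g_k=0$ on block $k$. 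For $l\ne k$, $g_k(w_i^{(l)})\le -R(1-\kappa)+\rho$, where $\kappa=\max_{k\neq l}v_k^\top v_l<1$. Taking $R\ge(\rho+1)/(1-\kappa)$ gives $g_k\le-1$ off block $k$.
\item Take the column $(h^{(k)}_{0,j}-MR,\;Q_k\bar h^{(k)}_j+Mv_k)$. Since $Q_k^\top Q_k=I$ and $Q_k^\top v_k=0$, it reproduces $y^{(k)}_{ij}$ exactly on block $k$.
\item On the other blocks it is at most $|h^{(k)}_{0,j}|+\|\bar h^{(k)}_j\|(R+\rho)-M$. This is negative for $M$ large, with $R$ fixed first.
\end{itemize}
The rows $(1,w_i^{(k)\top})$ lie in $\mathbb{R}^{r+1}$, giving an affine RMD of rank $\max_k r_k+1$. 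These sheets are exactly the paper's tangent hyperplanes $\mathcal D^k$ at $c_k=Rv_k$.
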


\begin{proof}
    Let $\Bar{r}=\max_k r_k$.
    To match the dimensions of each block, we append zero columns to $\Bar{W}^{(k)}$ and zero rows to $\Bar{H}^{(k)}$ whenever $r_k < \Bar{r}$.
    This does not affect the decomposition and thus, we do not rename the zero padded versions of the matrices $\Bar{W}^{(k)}$, and $\Bar{H}^{(k)}$. Since each $\Bar{W}^{(k)}$ is assumed to be in affine form, all the blocks admit a rank-$\Bar{r}$ RMD that can be geometrically represented as a point-hyperplane configuration in $\mathbb{R}^{\Bar{r}-1}$.
    Consequently, each block representation can be embedded in an $(\Bar{r}-1)$-dimensional hyperplane in $\mathbb{R}^{\Bar{r}}$.
    Then, consider a ball in $\mathbb{R}^{\Bar{r}}$, centered at the origin, of radius $R$, and $q$ distinct points $c_1, \dots, c_q$ on its surface. The idea of the proof is to arrange each embedded block representation in the hyperplanes $\mathcal{D}^k=\{ x \in \mathbb{R}^{\Bar{r}} \ | \ v_k^\top x=R \}$ that are tangent to the ball in $c_k$, with $ v_k=c_k/R $ unit vector.
    Note that such a construction can be obtained by affine transformations of the points (rows of $\Bar{W}^{(k)}$) and hyperplane coefficients (columns of $\Bar{H}^{(k)}$) in the points-hyperplanes configuration defined by the RMD of each block.
    Therefore, we use the following notation for the embedded points-hyperplanes configuration:
    \begin{itemize}
        \item $w_i^k \in \mathbb{R}^{ \Bar{r}}$ for $i=1,\dots,m_k$ denotes the embedded points.
        \item $a_j^k \in \mathbb{R}$ and $z_j^k \in \mathbb{R}^{\Bar{r}}$ for $k=1,\dots,q$, and $j=1,\dots,n_k$, the coefficients of the embedded $(\Bar{r}-2)$-dimensional affine subspace in $ \mathcal{D}^k$, that is,
        $$\mathcal{L}_j^k=\left\{x \in \mathcal{D}^k  \ | \ (z_j^k)^\top x=-a_j^k \right\}=\left\{x \in \mathbb{R}^{\Bar{r}} \ | \ (z_j^k)^\top x=-a_j^k, \  v_k^\top x = R\right\}.$$
    \end{itemize}
    By construction, $z_j^k$ is a tangent direction in $\mathcal{D}^k$, hence $v_k^\top z_j^k=0$ for $j=1,\dots,n_k$.
    Let $(\mathcal P_+)^k_j$ and $(\mathcal P_-)^k_j$ denote the partition of the points $w_i^k$ induced by the sign pattern of the $j$-th column of $X^{(k)}$. Since the construction preserves inner products, we have
    \begin{equation}
        (w_i^k)^\top z^k_j+a_j^k=X^{(k)}_{i,j} \quad \text{for every } i \in (\mathcal{P}_+)^k_j \qquad \text{and} \qquad   (w_i^k)^\top z^k_j+a_j^k \leq 0 \quad \text{for every } i \in (\mathcal{P}_-)^k_j.
        \label{eq:cond_W_bar}
    \end{equation}
 Moreover, we choose $R$ large enough so that, for every $k$, all points $w_i^s$ with $s\neq k$ lie on the same side of $\mathcal D^k$, at distance at least $\delta_1$ from it, meaning that $(w_i^s)^\top v_k = (w_i^s)^\top \frac{c_k}{R}  \leq  R -\delta_1 $ for all $s \neq k$ and $i=1,\dots,m_s$.
 Note that this is always possible since the points $w_i^s$ are fixed on the hyperplane $\mathcal{D}^s$ while the hyperplane offset grows linearly with $R$.
 
 Since our aim is to find an RMD of rank $\Bar{r}+1$, for every $k=1,\dots q$ and $j=1,\dots,n_k$, we need to define an affine hyperplane $\mathcal{H}^k_j=\{ x \in \mathbb{R}^{\Bar{r}} \ | \ (\Bar{h}_j^k)^\top x = -b_j^k\}$ that preserves the properties in~\eqref{eq:cond_W_bar} for $(\mathcal{P}_+)^k_j$, and $(\mathcal{P}_-)^k_j$ and also separates  $(\mathcal{P}_+)^k_j$ from all points in the other blocks, that is, 
  \begin{equation}
  \begin{aligned}
      & (1,w_i^k) (b_j^k,\Bar{h}^{k}_j)^\top=X^{(k)}_{i,j} \quad \text{for every } i \in (\mathcal{P}_+)^k_j, \\
      &(1,w_i^k) (b_j^k,\Bar{h}^{k}_j)^\top \leq 0 \quad \text{for every } i \in (\mathcal{P}_-)^k_j \cup \{ w_t^s \ | \ s=1,\dots,q \  s \neq k, \ t=1,\dots,m_s\}.
  \end{aligned}
  \label{eq:final_cond_diag}
    \end{equation}
It remains to show how to construct such a hyperplane.
First, define the hyperplane $\mathcal{S}^k_j$, passing through $\mathcal{L}_j^k$ and the origin, that is, $\mathcal{S}^k_j=\{ x \in \mathbb{R}^{\Bar{r}} \ | \ (a_j^k v_k+R z_j^k)^\top x=0 \}$.
Second, we rotate $\mathcal{S}_j^k$ by an angle $\theta_j$ around the fixed axes defined by the subspace $\mathcal{L}^k_j$.
Doing so is equivalent to applying the standard two-dimensional rotation to the normal vector $a_j^k v_k+R z_j^k$ of $\mathcal{S}_j^k$ in the plane
$\Span\{v_k,z_j^k / \lVert z_j^k  \rVert\}.$
The offset $a_j^k(\theta_j)$ is uniquely determined by requiring that the rotated hyperplane $\mathcal{S}_j^k(\theta_j)$ still contains $\mathcal{L}_j^k$, hence obtaining
\begin{equation*}
  \mathcal{S}_j^k(\theta_j)= \left\{ x \in \mathbb{R}^{\Bar{r}} \ | \  z_j^k(\theta_j)^\top x = -a_j^k(\theta_j) \right\},
\end{equation*}
with
\begin{equation*}
      z_j^k(\theta_j)=(a_j^k\cos(\theta_j)-R\lVert z_j^k \rVert  \sin(\theta_j)) v_k + \left(\frac{a_j^k}{\lVert z_j^k \rVert }\sin(\theta_j)+R\cos(\theta_j)\right)z_j^k ,
      \end{equation*} 
      and 
      \begin{equation*} 
      a_j^k(\theta_j)=\left( R^2 \lVert z_j^k \rVert + \frac{(a_j^k)^2}{\lVert z_j^k \rVert} \right) \sin(\theta_j).
\end{equation*}
Taking the scalar product with $w_i^k \in \mathcal{D}^k$ and using $(w_i^k)^Tv_k=R$,  we have
\begin{equation}
    \begin{aligned}
      (w_i^k)^\top z_j^k(\theta_j) + a_j^k(\theta_j) =& (a_j^k\cos(\theta_j)-R\lVert z_j^k \rVert  \sin(\theta_j))R + \left(\frac{a_j^k}{\lVert z_j^k \rVert }\sin(\theta_j)  +R\cos(\theta_j)\right) (w_i^k)^\top z_j^k  +\\
      & \left( R^2 \lVert z_j^k \rVert +\frac{(a_j^k)^2}{\lVert z_j^k \rVert} \right) \sin(\theta_j)  \\
      =& a_j^k R \cos(\theta_j) +  \left(\frac{a_j^k}{\lVert z_j^k \rVert }\sin(\theta_j)+R\cos(\theta_j)\right) (w_i^k)^\top z_j^k + \frac{(a_j^k)^2}{\lVert z_j^k \rVert} \sin(\theta_j) \\
       =& \left( R\cos(\theta_j) + \frac{a_j^k}{\lVert z_j^k \rVert }\sin(\theta_j)  \right) ((w_i^k)^\top z_j^k + a_j^k)=d_j(\theta_j) ((w_i^k)^\top z_j^k + a_j^k).
    \end{aligned}
    \label{eq:long_eq_theta}
\end{equation}
Therefore, the relation among the inner product is preserved up to the factor $d_j(\theta_j)$, which is the same for all points $w_i^k \in \mathcal{D}^k$.
Since we want the sign of the scalar products to be preserved, we choose an angle $\theta_j$ such that $d_j(\theta_j)$ is positive.
Solving $d_j(\theta_j) > 0$, we obtain
\begin{equation*}
    \theta_j \in \left]\, \theta_{\min},\ \theta_{\max} \,\right[,
    \qquad
    \theta_{\min} = \arctan\left(\frac{a_j^k}{R \lVert z_j^k \rVert}\right) - \frac{\pi}{2},
    \quad
    \theta_{\max} = \arctan\left(\frac{a_j^k}{R \lVert z_j^k \rVert}\right) + \frac{\pi}{2}.
\end{equation*}
Both endpoints of this interval make $\mathcal{S}_j^k(\theta_j)$ collapse onto
$\mathcal{D}^k$, with opposite orientations.
One can verify that as $\theta_j \to \theta_{\min}$, the orientations of $\mathcal{S}_j^k(\theta_j)$ and $\mathcal{D}^k$ coincide. 
Moreover, we observe that the function of $\theta_j$ that brings $ \mathcal{S}_j^k(\theta_j)$ in $ \mathcal{D}^k$ or equivalently $z_j^k(\theta_j)$ into $v_k$ and $-a_j^k(\theta_j)$ into $R$, is continuous for every $j=1,\dots,n_k$. Combining this fact with~\eqref{eq:long_eq_theta}, we deduce that, for every $j=1,\dots,n_k$, there exists an angle $\theta_j^* \in ]\theta_{\min},\theta_{\max}[$ such that it also holds
\begin{equation}
   (w_i^s)^T z_j^k(\theta_j^*) + a_j^k(\theta_j^*) \leq  0 \quad \text{for every }  i=1,\dots,m_s, \ s=1,\dots,q, \text{with } s \neq k,
   \label{eq:outdiag}
\end{equation}
hence, we found a hyperplane satisfying the conditions in~\eqref{eq:final_cond_diag} with $\Bar{h}_j^k=z_j^k(\theta_j^*)$ and $b_j^k=a_j^k(\theta_j^*)$.

Extending the same reasoning to all the columns $j=1,\dots,n_k$, and defining \newline \mbox{$ D^{(k)}=\text{diag}(d_1(\theta_1^*),\dots,d_{n_k}(\theta_{n_k}^*))$}, we have that $X^{(k)} D^{(k)} = \max(0,W^{(k)}  H^{(k)})$, where the $i$-th row of $W^{(k)} \in \mathbb{R}^{m_k \times (\Bar{r}+1)} $ contains the vector $[1,(w_i^k)^T]$ and the $j$-th column of $H^{(k)} \in \mathbb{R}^{ (\Bar{r}+1) \times n_k} $ contains $[a_j^k(\theta_j^*),z_j^k(\theta_j^*)]$. Thus, we have
\begin{equation}
    X^{(k)}=\max(0,W^{(k)} H^{(k)}) (D^{(k)})^{-1}=\max(0,W^{(k)} H^{(k)} (D^{(k)})^{-1}),
    \label{eq:diag}
\end{equation}
where the last equality holds because $(D^{(k)})^{-1}$ is a positive diagonal matrix. Therefore, combining~\eqref{eq:outdiag} and~\eqref{eq:diag} we obtain  

\begin{equation*}
X = \max \left( 0,  \begin{pmatrix}
    W^{(1)} \\
    W^{(2)} \\
    \vdots \\
    
    W^{(q)}
\end{pmatrix}
 \begin{pmatrix}
    H^{(1)} (D^{(1)})^{-1} &  H^{(2)}  (D^{(2)})^{-1}  & \cdots &  H^{(q)} (D^{(q)})^{-1} \\
\end{pmatrix} 
\right);
\end{equation*}
hence proving the theorem.

\end{proof}

\begin{figure}[t]
\centering

\begin{tikzpicture}[scale=0.9]


\def\R{5.5}
\def\alpha{72}


\coordinate (O) at (0,0);


\draw[gray!60] (O) circle (\R);

\foreach \ang/\name in {
72/1,
0/2,
144/3,
216/4,
288/5}
{
    \coordinate (C\name) at ({\R*cos(\ang)},{\R*sin(\ang)});
    \fill (C\name) circle (2pt);
    \node[above right] at (C\name) {$c_{\name}$};
}

\draw[thick] (0,0) -- (C2) node[midway, above] {$R$};

\draw[blue, thick]
(\R,-6)--(\R,6)
node[right]{$\mathcal D^{(2)}$};

\foreach \y in {-3.5,-3,-2,-1,1,2}
{
    \fill (\R,\y) circle (2pt);
}

\node[right] at (\R,-3.5)
{$ w_i^2$};


\coordinate (L2) at (\R,-2);

\fill (L2) circle (2pt);

\node[right] at (L2)
{$\mathcal L_j^2$};

\draw[dashed]
(O)--(L2)
node[midway, below]
{$\mathcal S_j^2$};


\coordinate (C) at (C1);

\coordinate (tan) at ({-sin(\alpha)},{cos(\alpha)});

\draw[blue, thick]
($(C)-6*(tan)$)--($(C)+6*(tan)$)
node[above]
{$\mathcal D^{(1)}$};

\foreach \t in {-2,-1,1,2}
{
    \fill ($(C)+\t*(tan)$) circle (2pt);
}

\foreach \t in {-3.5,-3}
{
   \node[orange] at ($(C)+\t*(tan)$){\Large$+$};
}


\coordinate (Lj) at ($(C)-2*(tan)$);

\fill (Lj) circle (2pt);

\node[above, right]
at (Lj)
{$\mathcal L_j^1$};


\draw[dashed]
(O)--(Lj)
node[midway,left]
{$\mathcal S_j^1$};


\coordinate (B) at (\R,3.2);

\coordinate (rot_dir) at ($(B)-(Lj)$);

\draw[red, very thick]
($(Lj)-2*(rot_dir)$)
--
($(Lj)+2*(rot_dir)$)
node[above right]
{$\mathcal S_j^1(\theta_j^*)$};


\coordinate (Sdir) at ($(O)-(Lj)$);

\coordinate (Aangle) at ($(Lj)+1.5*(Sdir)$);
\coordinate (Bangle) at ($(Lj)+1.5*(rot_dir)$);

\pgfmathanglebetweenpoints
{\pgfpointanchor{Lj}{center}}
{\pgfpointanchor{Aangle}{center}}
\let\startangle\pgfmathresult

\pgfmathanglebetweenpoints
{\pgfpointanchor{Lj}{center}}
{\pgfpointanchor{Bangle}{center}}
\let\endangle\pgfmathresult

\draw[red, thick, ->]
(Lj)
++(\startangle:12mm)
arc[
    start angle=\startangle,
    end angle=\endangle,
    radius=12mm
]
node[midway, below, red]
{$\theta_j^*$};


\node[black]
at ($(C)+2.5*(tan)+(0.8,0.5)$)
{$(\mathcal P_-)_j^1$};

\node[orange]
at ($(C)-3*(tan)+(0.3,0.7)$)
{$(\mathcal P_+)_j^1$};

\end{tikzpicture}

\caption{Geometric construction of the proof of Theorem~\ref{th:block_id_relu_aff} in the two-dimensional case.
In red: the affine hyperplane $\mathcal{S}_j^1(\theta_j^*)$, obtained by rotating $\mathcal{S}_j^1$ by an angle $\theta_j^*$ around $\mathcal{L}_j^1$.
The new hyperplane separates the points in orange $(\mathcal{P}_+)_j^1$ from those in $(\mathcal{P}_-)_j^1 \cup \{w_i^2 \ | \ i=1,\dots,n_s \}$, with $n_s$ denoting the number of rows in $W^{(s)}$.}
\label{fig:geom_interpr_2D}

\end{figure}

We now derive a bound on the $\rankrel$ of the class of matrices obtained from the Kronecker product of a binary matrix in $\{0,1\}$ with a fixed matrix $A$.
\begin{theorem}   \label{th:kron} 
  Let $A \in \mathbb{R}_+^{m \times n}$ and $B \in \{0,1\}^{p \times q}$, assume $\rankrel(A)=r_A$ and $\rankrel(B)=r_B$. Then there exists an affine RMD of the Kronecker product matrix $B\otimes A$ of rank at most $r_A+r_B+1$, hence
  \begin{equation}
  \rankrel(B\otimes A) \leq r_A+r_B+1.
  \label{eq:kron_rank}
   \end{equation}
\end{theorem}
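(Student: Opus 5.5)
The plan is to combine an exact RMD of $A$ and an exact RMD of $B$ additively, rather than multiplicatively as the Kronecker structure might suggest, and to use one extra rank-one term (the all-ones matrix) to push every entry of a zero block of $B$ below zero.

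Let $A=\max(0,WH)$ with $W\in\mathbb{R}^{m\times r_A}$, $H\in\mathbb{R}^{r_A\times n}$, and $B=\max(0,UV)$ with $U\in\mathbb{R}^{p\times r_B}$, $V\in\mathbb{R}^{r_B\times q}$. Index the rows of $B\otimes A$ by pairs $(k,i)$ and the columns by pairs $(l,j)$, so that
\[
(B\otimes A)_{(k,i),(l,j)}=B_{kl}A_{ij}.
\]
Fix a constant $C\ge \max\bigl(0,\max_{i,j}(WH)_{ij}\bigr)$, which is finite. I would then define
\[
\Theta_{(k,i),(l,j)} = (WH)_{ij} + C\bigl((UV)_{kl}-1\bigr).
\]
We need to check that $\max(0,\Theta)=B\otimes A$, and there are two cases.
\begin{itemize}
\item If $B_{kl}=1$, then since $B$ is binary and exactly realized, $(UV)_{kl}=1$. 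Hence $\Theta_{(k,i),(l,j)}=(WH)_{ij}$, and the ReLU of this gives $A_{ij}=B_{kl}A_{ij}$.
\item If $B_{kl}=0$, then $(UV)_{kl}\le 0$. Hence $\Theta_{(k,i),(l,j)}\le (WH)_{ij}-C\le 0$, and the ReLU gives $0=B_{kl}A_{ij}$.
\end{itemize}
The binary assumption on $B$ is exactly what makes the first case work: every positive entry of $B$ equals $1$, so the shift $C((UV)_{kl}-1)$ vanishes on it. For a general nonnegative $B$ the values $(UV)_{kl}$ would differ across positive entries, and this construction would fail.

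For the rank bound, write $\Theta$ in matrix form as
\[
\Theta=\mathbf 1_p\mathbf 1_q^\top\otimes WH + C\,(UV\otimes \mathbf 1_m\mathbf 1_n^\top) - C\,\mathbf 1_{pm}\mathbf 1_{qn}^\top .
\]
The three terms have rank at most $r_A$, $r_B$ and $1$ respectively, so $\rank(\Theta)\le r_A+r_B+1$. To get the affine form explicitly, take the row of the left factor indexed by $(k,i)$ to be $(1,\;W_{i,:},\;U_{k,:})$ and the column of the right factor indexed by $(l,j)$ to be $(-C,\;H_{:,j},\;C\,V_{:,l})$. Their product reproduces $\Theta$ entrywise, and the first column of the left factor is all ones, so this is an RMD in the form~\eqref{eq:struct_W}. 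This proves~\eqref{eq:kron_rank}.

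I do not expect a real obstacle. The only points to watch are that $C$ dominates $\max_{i,j}(WH)_{ij}$ (and is nonnegative), and that exactness of the two given RMDs is what gives $(UV)_{kl}=1$ on the support of $B$ and $(UV)_{kl}\le 0$ off it. Degenerate cases such as $A=0$ or $B=0$ are covered by the same formula, possibly with an unused dimension.
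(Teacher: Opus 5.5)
Your proposal is correct and follows essentially the paper's proof. The paper uses the same affine factors, with rows $(1, u_k^\top, W_{i,:})$ and columns $(-\delta, \delta v_l, H_{:,j})$, so that each block of $\Theta$ equals $WH+\delta(u_k^\top v_l-1)\mathbf 1_m\mathbf 1_n^\top$; its condition $\delta>\lVert WH\rVert_\infty$ plays the role of your $C$, and the only other differences are the order of the factor columns and your explicit three-term Kronecker splitting to certify the rank bound.
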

\begin{proof}
    Let $(U,V)$ be a rank-$r_B$ RMD of $B$, $B=\max(0,UV).$
    Denote the rows of $U$ by $u_i^\top$ and the columns of $V$ as $v_j$.
    Let $W$ and $H$ such that $A=\max(0,WH)$ and $\rank(WH)=r_A$, and let $\delta>0$ be a positive coefficient.
    Define
    \begin{equation}
    \Theta=\begin{pmatrix}
         \mathbf 1_m&\mathbf 1_m u_1^\top & W &  \\
        \vdots & \vdots & \vdots\\
         \mathbf 1_m& \mathbf 1_m u_p^\top& W  \\
    \end{pmatrix}
    \begin{pmatrix}
    -\delta\mathbf 1_n^\top & \cdots & -\delta \mathbf 1_n^\top\\
    \delta v_1 \mathbf 1_n^\top & \cdots & \delta v_q\mathbf 1_n^\top\\
        H & \cdots & H\\
    \end{pmatrix},
\end{equation}
where $\mathbf 1_m \in \mathbb{R}^m$ denotes the vector of all 1.
Then, $\Theta \in \mathbb{R}^{mp \times nq}$ has rank smaller than $r_A+r_B+1$, and we now show that it is an RMD of $B \otimes A$.
Since $\max(0,UV)$ is an RMD of the binary matrix $B$, it holds that $u_i^\top v_j=1$ whenever $B_{i,j}=1$ and $u_i^\top v_j \leq 0$ whenever  $B_{i,j}=0$.
Then, by construction, the matrix $\Theta$ contains a block of the form $W H + \delta (u_i^\top v_j-1)\mathbf{1}_m \mathbf{1}_n^\top=W H$ for all $(i,j)$ corresponding to entries for which $B_{i,j}=1$.
In contrast, when $B_{i,j}=0$, $u_i^\top v_j \leq 0$ and the corresponding block matrix in $\Theta$ is $W H + \delta  (u_i^\top v_j-1) \mathbf{1}_m \mathbf{1}_n^\top$.
Therefore, imposing $\delta > \lVert WH \rVert_\infty$, the block contains only negative values; thus, proving the theorem.
\end{proof}

Note that in general $\rank(B \otimes A) \leq \rank(A) \rank(B)$, while Lemma~\ref{th:kron} provides an upper bound for $\rankrel(B \otimes A)$ which is additive in $\rankrel(A)$ and $\rankrel(B)$. However, the bound in (\ref{eq:kron_rank}) is not tight in general. 
For example, when $B$ is the $2\times2$ identity matrix, we have 
\begin{equation}
    X=\begin{pmatrix}
        1 & 0\\
        0 & 1\\
    \end{pmatrix}\otimes A = \begin{pmatrix}
        A & 0\\
        0 & A\\
    \end{pmatrix}=\max \left( 0, \begin{pmatrix}
        A & -A\\
        -A & A\\
    \end{pmatrix}\right)=\max(0,\Theta). 
\end{equation}
This means that $\rankrel(X)=\rankrel(A)$, since the matrix on the right hand side has the same $\rankrel$ as $A$. Once again, this example is counterintuitive at first sight since the dimensions of $X$ are twice those of $A$. Similarly, consider
\begin{equation}
    X= B \otimes A=\begin{pmatrix}
        1 & 1\\
        1 & 0\\
    \end{pmatrix} \otimes A=\begin{pmatrix}
        A & A\\
        A & 0\\
    \end{pmatrix}.
\end{equation}
By Theorem~\ref{th:kron}, we have that $\rankrel(X) \leq \rankrel(A)+3$, but a tighter bound holds.
First, note that $\rankrel(B)=\rank(B)=2$, then write $A=\max(0,WH),$ and
\begin{equation}
    X=\begin{pmatrix}
        A & A\\
        A & 0\\
    \end{pmatrix}=\max \left(\begin{pmatrix}
        W & 0\\
        W & \Bar{a}\\
    \end{pmatrix} \begin{pmatrix}
        H & H\\
        0 & \Bar{b}^\top\\
    \end{pmatrix},0\right)= \max \left(\begin{pmatrix}
        WH & WH\\
        WH & WH+\Bar{a}\Bar{b}^\top\\
    \end{pmatrix},0 \right), 
    \label{eq:tri_A_block}
\end{equation}
where $\Bar{a}\in \mathbb{R}^m$, $\Bar{b} \in \mathbb{R}^n$, suitably chosen such that $WH+\Bar{a}\Bar{b}^\top$ contains only negative values.
Equation~\eqref{eq:tri_A_block} shows that $\rankrel(X) \leq \rankrel(A)+1.$ 

We now consider block-sparse matrices with different blocks denoted by $A^{(1)}, \dots,A^{(q)} $.
We are interested in matrices of the form 
\begin{equation}
    X=\sum_{k=1}^q  C^{(k)} \otimes A^{(k)},
    \label{eq:block_mat}
\end{equation}
where $C^{(k)} \in \{0,1\}^{p \times s}$ is a binary matrix, containing a single nonzero element at position $(i_k,j_k)$, that is,  $C^{(k)}=e_{i_k}e_{j_k}^\top$ where $e_{i}$ is the $i$-th unit vector.
Therefore, each block matrix can appear only once in $X$, and the position of the $k$-th block $A^{(k)}$ in $X$ corresponds to the position of the nonzero element in $C^{(k)}$. 
We also define $G=\sum_{k=1}^q C^{(k)} \in \{0,1\}^{p \times s}$, which indicates where blocks appear in $X$.   
For simplicity, we assume that the block matrices $A^{(k)}$ are all $m \times n$ while the matrices $C^{(k)}$ has dimensions $p \times s$.
Therefore, the final matrix $X$ has dimension $pm\times sn$. 
An example of a block-sparse matrix as in~\eqref{eq:block_mat} is  
\begin{equation}
    \begin{pmatrix}
        A^{(1)} & 0 & A^{(2)} \\
        0 & A^{(3)} & 0 \\
        0 & 0 & A^{(4)}
    \end{pmatrix} =  \underbrace{\begin{pmatrix}
        1 & 0 & 0 \\
        0 & 0 & 0 \\
        0 & 0 & 0
    \end{pmatrix}}_{ C^{(1)}} \otimes A^{(1)} + \underbrace{\begin{pmatrix}
        0 & 0 & 1 \\
        0 & 0 & 0 \\
        0 & 0 & 0
    \end{pmatrix}}_{ C^{(2)}} \otimes A^{(2)}+\underbrace{\begin{pmatrix}
        0 & 0 & 0 \\
        0 & 1 & 0 \\
        0 & 0 & 0
    \end{pmatrix}}_{ C^{(3)}} \otimes A^{(3)}+
    \underbrace{\begin{pmatrix}
        0 & 0 & 0 \\
        0 & 0 & 0 \\
        0 & 0 & 1
    \end{pmatrix}}_{ C^{(4)}} \otimes A^{(4)}.
    \label{eq:ex_block}
\end{equation}
We now introduce the definition of a \textit{block-diagonal submatrix} of $X$. 
\begin{definition}
    Let $X$ be defined as in~\eqref{eq:block_mat}, and let
    $P \subseteq \{1,\dots,q\}$. We say that the blocks 
   $\{ A^{(k)} \}_{k\in P}$  form a block diagonal submatrix of $X$ if, after a suitable permutation
    of rows and columns, the rows and columns of $X$ occupied by the blocks $\{ A^{(k)} \}_{k\in P}$ give a block diagonal matrix. 
    \label{def:induced_matching}
\end{definition}

Block diagonal submatrices are closely related to the concept of induced matching in graph theory. In fact, each subset of indices that induces a block diagonal submatrix is an induced matching for the bipartite graph whose adjacency matrix is $G$.

\begin{definition}
    Let $X$ be defined as in~\eqref{eq:block_mat}, and let
    $P_1,\dots,P_T$ be a partition of the block indices $\{1,\dots,q\}$.
    We say that $P_1,\dots,P_T$ is a partition into block diagonal
    submatrices if, for every \newline \mbox{$t=1,\dots,T$}, the blocks $\{ A^{(k)} \}_{k\in P_t}$ form a block diagonal submatrix of $X$ (Definition~\ref{def:induced_matching}).  
    \label{def:block_diag_partition}
\end{definition}

In the example in~\eqref{eq:ex_block}, the partitions into block diagonal
    submatrices are $\{1,3\}\sqcup\{2\}\sqcup\{4\}$, $\{2,3\}\sqcup\{1\}\sqcup\{4\}$, $\{3,4\}\sqcup\{1\}\sqcup\{2\}$, and $\{1\}\sqcup\{2\}\sqcup\{3\}\sqcup\{4\}$. 
    
The following theorem provides an upper bound on the $\rankrel$ of block sparse matrices as in~\eqref{eq:block_mat}.
\begin{theorem} \label{th:sumKrosparse}
    Let $X$, $A^{(k)}$, and $C^{(k)}$ be defined as in~\eqref{eq:block_mat}. Let $G=\sum_{k=1}^q C^{(k)}$ and $P_1,\dots,P_T$ be a partition of the block indices $\{1,\dots,q\}$ into block diagonal
    submatrices of $X$. Then
    \begin{equation}
        \rankrel(X) \leq 1+\rankrel(G) + \sum_{t=1}^T \max_{k \in P_t} \rankrel(A^{(k)}).
        \label{eq:bound_block}
    \end{equation}
\end{theorem}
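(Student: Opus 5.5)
The plan is to build an explicit low-rank matrix $\Theta$ with $X=\max(0,\Theta)$ as a sum of $T+1$ pieces. There is one piece for each part $P_t$ and one ``gating'' piece coming from an RMD of $G$, in the spirit of the proof of Theorem~\ref{th:kron}. Each piece will contribute exactly the claimed rank.

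\emph{Gating term.} Let $r_G=\rankrel(G)$ and write $G=\max(0,UV)$ with $U\in\mathbb{R}^{p\times r_G}$ and $V\in\mathbb{R}^{r_G\times s}$. Since $G$ is binary, $u_i^\top v_j=1$ where $G_{ij}=1$ and $u_i^\top v_j\le 0$ where $G_{ij}=0$. Define
\[
\Gamma=\delta\,(UV-\mathbf 1_p\mathbf 1_s^\top)\otimes(\mathbf 1_m\mathbf 1_n^\top),
\]
which has rank at most $r_G+1$. By construction $\Gamma$ vanishes on every block position $(i_k,j_k)$ occupied by some $A^{(k)}$, and it is $\le-\delta$ entrywise on every block position where $G$ is zero.

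\emph{One term per part.} For each $k$, let $r_k=\rankrel(A^{(k)})$ and write $A^{(k)}=\max(0,W^{(k)}H^{(k)})$. Zero-pad these factors to a common inner dimension $\bar r_t=\max_{k\in P_t}r_k$. Then define $\Theta_t=\mathcal W_t\mathcal H_t$ as follows:
\begin{itemize}
\item the $pm\times\bar r_t$ matrix $\mathcal W_t$ has $W^{(k)}$ in block row $i_k$ for each $k\in P_t$, and zeros in all other block rows;
\item the $\bar r_t\times sn$ matrix $\mathcal H_t$ has $H^{(k)}$ in block column $j_k$ for each $k\in P_t$, and zeros elsewhere.
\end{itemize}
This is well defined because, within an induced matching, the row indices $i_k$ are pairwise distinct, and so are the column indices $j_k$. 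Clearly $\rank\Theta_t\le\bar r_t$.

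Now set $\Theta=\Gamma+\sum_t\Theta_t$, whose rank is at most $1+r_G+\sum_t\max_{k\in P_t}r_k$, as required. It remains to check $\max(0,\Theta)=X$ blockwise. The block of $\Theta_t$ at position $(i,j)$ is nonzero only if $i=i_k$ and $j=j_{k'}$ for some $k,k'\in P_t$, and in that case it equals $W^{(k)}H^{(k')}$. This gives three cases.
\begin{itemize}
\item At the block position $(i_k,j_k)$ with $k\in P_t$: $\Gamma$ is zero, $\Theta_t$ contributes $W^{(k)}H^{(k)}$, and every other $\Theta_{t'}$ must contribute zero. This is the key step, where the induced-matching property enters. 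If $\Theta_{t'}$ had a nonzero block there, we would have $i_k=i_{l}$ and $j_k=j_{l'}$ for some $l,l'\in P_{t'}$. Here $l\ne l'$, since otherwise blocks $k$ and $l$ would share a position. So $(i_l,j_{l'})$ would be an edge of $G$, namely block $k$, joining two distinct edges of the induced matching $P_{t'}$, which is a contradiction. Hence $\Theta$ equals $W^{(k)}H^{(k)}$ on this block, and its positive part is $A^{(k)}$.
\item At every block position where $G$ is zero: $\Gamma\le-\delta$ there, while $\sum_t\Theta_t$ is bounded by the constant $c=\sum_t\max_{k,k'\in P_t}\lVert W^{(k)}H^{(k')}\rVert_\infty$.
\item The within-part off-diagonal blocks $W^{(k)}H^{(k')}$ with $k\neq k'$ lie at positions where $G$ is zero, again by the induced-matching property, so they fall under the previous case.
\end{itemize}
Choosing $\delta>c$ therefore makes $\Theta$ strictly negative on all blocks of $X$ that are zero, which completes the construction.

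The main obstacle I expect is exactly the cross-interference check in the first case: showing that a part's term cannot leak onto another part's nonzero block. This is where Definition~\ref{def:block_diag_partition} must be used precisely, through the equivalence with induced matchings in the bipartite graph of $G$. I would write that combinatorial step carefully. The rest is bookkeeping.
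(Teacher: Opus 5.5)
Your proposal is correct and follows the paper's proof essentially step for step. It uses the same gating term $\delta(UV-\mathbf 1_p\mathbf 1_s^\top)\otimes\mathbf 1_m\mathbf 1_n^\top$, the same per-part factors (your $\mathcal W_t,\mathcal H_t$ are the paper's $L^{(t)}=\sum_{k\in P_t}e_{i_k}\otimes W^{(k)}$ and $R^{(t)}$), and the same two-case check with $\delta$ chosen large. Your explicit argument that a part $P_{t'}$ cannot leak onto the block of a different part is a useful detail that the paper leaves implicit.
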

\begin{proof}
    Similarly to the proof of Theorem~\ref{th:kron}, we construct block factors in the decomposition. We now show how the blocks forming a block diagonal submatrix of $X$ can be exploited to reduce the rank of an RMD of $X$.

    Let $A^{(k)}=\max(0,W^{(k)} H^{(k)})$ be the rank-$r_k$ RMD of $A^{(k)}$. For each $P_t$, we zero pad the columns of $W^{(k)}$ and the rows of $H^{(k)}$ to match $R_t=\max_{k \in P_t} r_k$. We do not change the notation of the zero-padded matrices. For each $t=1,\dots,T$, we allow the blocks in $P_t$ to share the same block columns and block rows in the decomposition; thus, we define 
    \begin{equation*}
     L^{(t)}=\sum_{k \in P_t} e_{i_k} \otimes W^{(k)} \in \mathbb{R}^{pm \times R_t}, \quad \text{and} \quad R^{(t)}=\sum_{\ell \in P_t} e_{j_\ell}^\top \otimes H^{(\ell)} \in \mathbb{R}^{R_t \times sn}. \\
    \end{equation*}
 Computing the product, we obtain
 \begin{equation}
      L^{(t)}R^{(t)}=\sum_{k \in P_t} e_{i_k}e_{j_k}^\top \otimes W^{(k)}H^{(k)} +\sum_{k \in P_t, \ k\neq \ell} e_{i_k}e_{j_\ell}^\top \otimes W^{(k)} H^{(\ell)}. 
      \label{eq:sum_block_LR}
 \end{equation}
 Summing over $t=1,\dots, T$, we get $F=L^{(1)}R^{(1)}+,\dots,+L^{(T)}R^{(T)}$, whose rank is upper bounded by \newline $\sum_{t=1}^T \max_{k \in P_t} \rankrel(A^{(k)})$. 
 
Let $G=\max(0,UV)$ be an RMD of $G$ of rank $\rankrel(G)$.
Let
 \begin{equation}
     \Theta=F+\Phi=F + \delta  (UV-\mathbf{1}_p \mathbf{1}_s^\top) \otimes \mathbf{1}_m \mathbf{1}_n^\top,
     \label{eq:candidate_theta}
 \end{equation}
 be our candidate RMD.
 We now show that $X=\max(0,\Theta)$.
 In what follows, we use the notation $\Theta(i,j)$ to denote the $(i,j)$-th block of $\Theta$, according to the indexing of the entries of $G$.
 \begin{itemize}
     \item If $G_{i,j}=1$, then $\Phi(i,j)=0$. By the definition of $X$, there exists a unique block $A^{(k)}$ in position $(i,j)=(i_k,j_k)$ and since $P_1,\dots,P_T$ is a partition into block diagonal submatrices, the second term in the summation in~\eqref{eq:sum_block_LR} is zero. Therefore, the $(i,j)$-th block of $\Theta$ is $W^{(k)}H^{(k)}$, that is, $\Theta(i_k,j_k)=W^{(k)}H^{(k)}$. 
    \item If $G_{i,j}=0$, then $\Phi(i,j)$ is a componentwise negative matrix with entries not larger than $-\delta$.
    By choosing $\delta$ sufficiently large, we can ensure that $\Theta(i,j)$ is nonpositive.
 \end{itemize}
 Therefore, $\Theta$ is an RMD of $X$, and
 \begin{equation*}
    \rankrel(X) \leq \rank(\Theta) \leq \rank(F) + \rank(\Phi) \leq 1+\rankrel(G)+\sum_{t=1}^T \max_{k \in P_t} \rankrel(A^{(k)}), 
 \end{equation*}
which concludes the proof. 
\end{proof}

Applying the construction shown in the proof to the example in~\eqref{eq:ex_block}, with the partition in block diagonal submatrices $\{1,3\}\sqcup\{2\}\sqcup\{4\}$, we obtain the following an RMD of $X$:
\[
\Theta=\begin{pmatrix}
    -\delta\mathbf 1_m & \delta\mathbf 1_m u_1 & W^{(1)} & W^{(2)} & 0\\
    -\delta\mathbf 1_m & \delta\mathbf 1_m u_2 & W^{(3)} & 0 & 0\\
    -\delta\mathbf 1_m & \delta\mathbf 1_m u_3 & 0 & 0 & W^{(4)}
\end{pmatrix}
\begin{pmatrix}
    \mathbf 1_n^\top & \mathbf 1_n^\top & \mathbf 1_n^\top\\
    v_1\mathbf 1_n^\top & v_2\mathbf 1_n^\top & v_3\mathbf 1_n^\top\\
    H^{(1)} & H^{(3)} & 0\\
    0 & 0 & H^{(2)}\\
    0 & 0 & H^{(4)}
\end{pmatrix},
\]
where the factors $W^{(1)}$ and $W^{(3)}$ can share the same column in the block factor since they form a block diagonal submatrix of $X$.

\section{Exact RMD as a feasibility problem}
\label{sec:algo}

In this section, we present two formulations of exact RMD as feasibility problems. 

\subsection{First formulation: exact RMD as a QCP}

We propose a formulation of exact RMD as a Nonconvex Quadratically Constrained Program (QCP), dubbed ReLU-QCP, by introducing the splitting $\Theta=WH$, with $W\in \mathbb{R}^{m \times r}$ and $H\in \mathbb{R}^{r \times n}$ and constraining the factors $W$ and $H$ to have values in a fixed interval $[-\Bar{M},\Bar{M}]$, with $\Bar{M}$ a positive constant:
\begin{equation}
\begin{aligned}
\text{find}\quad & W \in [-\Bar{M},\Bar{M}]^{m\times r},\quad H \in [-\Bar{M},\Bar{M}]^{r\times n} \\ \text{subject to}\quad & (WH)_{ij}=X_{ij}, \qquad \forall \ (i,j) \ : \ X_{ij}>0\\ 
& (WH)_{ij}\leq 0,  \qquad \forall \ (i,j) \ : \ X_{ij}=0. 
\end{aligned} \label{eq:identity_relu_qcqp_compact} 
\end{equation}
The formulation~\eqref{eq:identity_relu_qcqp_compact} is a feasibility problem as it has no objective function.  
The main reason for introducing such a formulation is that it can be tackled by a global solver such as Gurobi. Such a solver requires the search space to be bounded, and the addition of the constraints $W \in [-\Bar{M},\Bar{M}]^{n\times r}$ and $ H \in [-\Bar{M},\Bar{M}]^{r\times n}$ serves this purpose. Moreover, constraining the factors to be bounded also solves the ill-posedness of RMD highlighted in Proposition~\ref{prop:ill_posed}. 

Listing~\ref{lst:yalmip} gives a MATLAB implementation of~\eqref{eq:identity_relu_qcqp_compact} using the YALMIP toolbox with Gurobi as the underlying solver.

\begin{lstlisting}[float=htbp, caption={YALMIP formulation of the feasibility problem~\eqref{eq:identity_relu_qcqp_compact}. The constant $\Bar{M}$ is denoted Mbar in the code.}, label={lst:yalmip}]
% Decision variables
W = sdpvar(m, r, 'full');
H = sdpvar(r, n, 'full');
P = W*H;                      % bilinear expression

% Bounds and admissibility constraints
pos = X > 0;
F = [-Mbar <= W(:) <= Mbar, -Mbar <= H(:) <= Mbar, ...
     P(pos) == X(pos), ...    % (WH)_ij = X_ij  if X_ij > 0
     P(~pos) <= 0];           % (WH)_ij <= 0   if X_ij = 0

% Feasibility problem solved with Gurobi
ops = sdpsettings('solver', 'gurobi', 'gurobi.NonConvex', 2);
optimize(F, [], ops);
\end{lstlisting}

\subsection{Second formulation based on Latent-RMD}

In his seminal paper~\cite{saul2022nonlinear}, Saul proposes to address Fro-RMD in \eqref{eq:ReLU_NMD} by solving an alternative formulation, dubbed Latent-RMD, where a new latent variable $Z$ satisfying $\max(0,Z)=X$ is introduced, leading to the problem 
\begin{equation}
\argmin_{Z,\Theta} \lVert Z-\Theta \rVert_F^2 \qquad \text{such that } \quad \max(0,Z)=X \quad \text{and} \quad \rank(\Theta) \leq r.
    \label{eq:lat_NMD}
\end{equation}
Latent-RMD has a smooth objective function which is easier to minimize than that of Fro-RMD, and its solutions typically provide accurate approximations of those of Fro-RMD, see~\cite{gillis2025extrapolated,saul2026subzero}. 
We first observe that, given $X \in \mathbb{R}^{m \times n}$, and assuming there exists a rank-$r$ matrix $\Theta \in \mathbb{R}^{m \times n}$ such that $X=\max(0,\Theta)$, for any low-rank solution $\Theta^*$ of~\eqref{eq:ReLU_NMD}, there exists a trivial, feasible latent matrix $Z^*=\Theta^*$, such that $(Z^*,\Theta^*)$ is a solution of~\eqref{eq:lat_NMD} and vice versa.
Therefore, in the exact case, Fro-RMD is equivalent to Latent-RMD, which reduces to a feasibility problem of the form
\begin{equation}
    \text{find } x \text{ in } \mathcal{A} \cap \mathcal{B} , \quad \text{with } \mathcal{A} =\{ Z \in \mathbb{R}^{m \times n} | \max(0,Z)= X \} \ \text{ and } \mathcal{B} =\{ \Theta \in  \mathbb{R}^{m \times n} | \rank(\Theta) \leq r\}.
    \label{eq:feas_prob}
\end{equation}
 It is easy to show that the $\mathcal{A} $ is a closed, convex set, while the $\mathcal{B} $ is closed but nonconvex. 
Feasibility problems, such as the one in~\eqref{eq:feas_prob}, can be solved by the Alternating projection Method (APM), which, starting from a feasible point in one of the sets, produces a sequence of iterates by alternately projecting onto the two sets.

\subsubsection{Naive-RMD and its convergence}

Due to its simplicity, the majority of the available algorithms for Fro-RMD are designed for solving Latent RMD.
Among those, a simple algorithm is the Naive-RMD proposed in~\cite{saul2022nonlinear} by Saul, whose update consists of an alternating optimization between $Z$ and $\Theta$. To our knowledge, no convergence guarantees have been established for the Naive-RMD algorithm, either in the exact or in the approximate case.  

In this section, we propose a novel interpretation of the Naive-RMD as an alternating projection method (APM) for nonconvex feasibility problems.
By using the available theory on APM we establish local convergence for Naive-RMD when applied to exact RMD. We now present in more detail the Naive-RMD algorithm, whose iterative scheme consists of updating $Z$ while $\Theta$ is fixed by solving the subproblem
\mbox{$\argmin_Z \lVert Z-\Theta \rVert_F^2$} such that $  \max(0,Z)=X 
$, whose optimal solution is given by
\begin{equation}
    Z_{ij}=\begin{cases}
        X_{ij} \quad & \mbox{if } X_{ij} > 0\\
        \min(0,\Theta_{ij}) & \mbox{if } X_{ij} = 0.\\        
    \end{cases}
    \label{eq:opt_Z}
\end{equation}
Then, fixing $Z$, it updates $\Theta$ by solving $\argmin_{\Theta} \lVert Z-\Theta \rVert_F^2$ such that $\rank(\Theta) \leq r$, whose optimal solution(s) are
\begin{equation}
    \Theta=\text{TSVD}(Z,r),
    \label{eq:TSVD}
\end{equation}
where TSVD($\cdot$,$r$) denotes a best rank-$r$ approximation of $Z$, obtained by truncating its SVD at the $r$-th singular values.
It is easy to verify that the updates in~\eqref{eq:opt_Z} and in~\eqref{eq:TSVD} are the projections onto $\mathcal{A}$ and $\mathcal{B}$, respectively.
Therefore, the Naive-RMD algorithm can be interpreted as an APM for exact RMD.
 
The APM sequence exhibits global convergence when $\mathcal{A}$ and $\mathcal{B}$ are convex~\cite{bauschke1996projection}, and local convergence in the nonconvex case under additional assumptions on the sets and their intersection. Depending on the properties of the two sets, several convergence results have been established; see~\cite{drusvyatskiy2015transversality,lewis2008alternating,lewis2009local}.
Due to the lack of convexity of $\mathcal{B}$, the Naive algorithm is not guaranteed to be globally convergent. However, local convergence can be directly derived from the contribution by Bellavia et al.~\cite{bellavia2026inexact}, which proposes a specialized analysis for general matrix completion problems.
The feasibility problem that they consider is different from ours since the set $\mathcal{A}$ is substituted by $\{ Z \in \mathbb{R}^{m \times n} \ | \ Z_{ij}=X_{ij} \ \text{for every } \ (i,j) \in \Omega \}$, where $\Omega$ is a known set of indices while $\mathcal{B}$ does not change. Our set $\mathcal{A}$ differs from theirs in that it also involves additional inequality constraints. Nevertheless, both sets share similar properties; in particular, they are both semianalytic, i.e., locally described by finitely many equality and inequality constraints. Therefore,  their analysis can be extended to our setting. We now state the local convergence result, and we refer to~\cite{bellavia2026inexact} for the details.

\begin{theorem}
    Let $\mathcal{A} $ and $\mathcal{B} $ be defined as in~\eqref{eq:feas_prob}, and assume there exists a matrix $Y^*$ of rank exactly $r$ in $\mathcal{A}  \cap \mathcal{B} $. Then there exists a  neighborhood $\mathcal{U}$ of $Y^*$, such that, starting from $\Theta^0 \in \mathcal{U}$, the sequences  $\{Z^k\}_{k \in \mathbb{N}} $ and $\{\Theta^k\}_{k \in \mathbb{N}} $ generated by the Naive-RMD algorithm converge to a matrix $Y \in \mathcal{A}  \cap \mathcal{B} $ with rates $\lVert Y-Z^k \rVert=\mathcal{O}(k^{-\frac{2-\omega}{2 \omega}})$  $\lVert Y-\Theta^k \rVert=\mathcal{O}(k^{-\frac{2-\omega}{2 \omega}})$ for some $\omega \in (0,2)$.
    \label{th:con_Naive_NMD}
\end{theorem}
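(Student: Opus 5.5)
The plan is to follow the framework of Bellavia et al.~\cite{bellavia2026inexact}, which in turn rests on the Kurdyka--Łojasiewicz (KL) analysis of alternating projections for semialgebraic or semianalytic sets. We will use the fact that the Naive-RMD iterates~\eqref{eq:opt_Z}--\eqref{eq:TSVD} are exact alternating projections onto $\mathcal{A}$ and $\mathcal{B}$. The key steps are as follows.

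First, we verify the regularity of the two sets near $Y^*$.
\begin{itemize}
\item $\mathcal{A}$ is a closed convex polyhedron, defined by the equalities $Z_{ij}=X_{ij}$ on the support of $X$ and the inequalities $Z_{ij}\le 0$ off it. In particular, it is semialgebraic, and the projection onto it is single valued and nonexpansive.
\item $\mathcal{B}$ is the real algebraic variety of matrices of rank at most $r$. Near a point of rank exactly $r$, it coincides with the smooth manifold $\mathcal{M}_r$ of rank-$r$ matrices.
\item $\mathcal{M}_r$ is prox-regular at $Y^*$. Hence, for $\Theta$ close to $Y^*$, the projection $\mathrm{TSVD}(\cdot,r)$ is single valued and smooth, since $\sigma_r(Y^*)>\sigma_{r+1}(Y^*)=0$ gives a spectral gap that persists in a neighborhood.
\end{itemize}
This is where the assumption that $Y^*$ has rank \emph{exactly} $r$ is used.

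Second, we introduce the merit function
\[
f(Z,\Theta)=\tfrac12\lVert Z-\Theta\rVert_F^2+\iota_{\mathcal{A}}(Z)+\iota_{\mathcal{B}}(\Theta).
\]
The APM steps are exact block-coordinate minimizations of $f$, so $f$ is nonincreasing along the iterates. Moreover, we obtain a sufficient decrease inequality
\[
f(Z^k,\Theta^k)-f(Z^{k+1},\Theta^{k+1})\ge c\bigl(\lVert Z^{k+1}-Z^k\rVert^2+\lVert\Theta^{k+1}-\Theta^k\rVert^2\bigr),
\]
which follows from the nonexpansiveness and local smoothness of the projections. We also obtain a relative error bound $\operatorname{dist}(0,\partial f(Z^{k+1},\Theta^{k+1}))\le C\lVert \Theta^{k+1}-\Theta^k\rVert$, using the normal cone of $\mathcal{M}_r$ near $Y^*$.

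Since $f$ is semialgebraic (a polynomial plus indicators of semialgebraic sets), it satisfies the KL inequality at $(Y^*,Y^*)$ with a desingularizing function $\varphi(s)=c\,s^{1-\theta}$. Here $f(Y^*,Y^*)=0$ is the minimum value.

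Third, we apply the standard local KL argument in the style of Attouch--Bolte. Starting in a sufficiently small neighborhood $\mathcal{U}$, an induction shows that the iterates never leave a larger ball around $Y^*$, on which the two regularity properties hold. The trajectory therefore has finite length and converges to a critical point $(\bar Z,\bar\Theta)$ of $f$. To conclude that the limit lies in $\mathcal{A}\cap\mathcal{B}$, we use the KL inequality with the fact that $f\ge 0$ vanishes at $(Y^*,Y^*)$. One route is to argue that near $Y^*$ the only critical value attainable is $0$; the alternative is to invoke the result of~\cite{bellavia2026inexact} that limits of APM near a point of the intersection satisfy $\bar Z=\bar\Theta$. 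Either way, $Y:=\bar Z=\bar\Theta\in\mathcal{A}\cap\mathcal{B}$.

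Finally, the rate follows from the usual case analysis on the Łojasiewicz exponent. When $\theta\in(1/2,1)$, the tail length satisfies $\sum_{j\ge k}\lVert x^{j+1}-x^j\rVert=O(k^{-(1-\theta)/(2\theta-1)})$. Writing $\omega=2\theta\in(1,2)$ gives $O(k^{-(2-\omega)/(2\omega-2)})$; we will match this to the stated exponent $(2-\omega)/(2\omega)$ through the parametrization used in~\cite{bellavia2026inexact}, which is a bookkeeping step. The cases $\theta\le 1/2$ give linear or finite convergence, which are faster than the stated rate and so are covered.

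The main obstacle is the second step: adapting the error bound of~\cite{bellavia2026inexact} from their affine set to our set $\mathcal{A}$, which also has inequality constraints. The normal cone of $\mathcal{A}$ now varies with the active set $\{(i,j): X_{ij}=0,\ Z_{ij}=0\}$, so the subgradient estimate must hold uniformly over the finitely many faces of $\mathcal{A}$. We would handle this by a finite union argument: each face is affine, the analysis of~\cite{bellavia2026inexact} applies face by face, and we take the worst constants and exponent over the faces.
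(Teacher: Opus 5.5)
Your proposal is correct and takes essentially the same route as the paper, which simply invokes the KL-based alternating-projection analysis of Bellavia et al.\ after noting that $\mathcal{A}$ is semianalytic; your Attouch--Bolte sketch fills in the details that the paper leaves to that reference. The obstacle you flag is harmless: the optimality conditions of the two projections give $(\Theta^k-\Theta^{k+1},0)\in\partial f(Z^{k+1},\Theta^{k+1})$ directly, so the relative error bound holds with $C=1$ and needs no face-by-face analysis, the KL property follows from semialgebraicity of $f$ as a whole, and the stated exponent coincides exactly with yours under $\omega=4\theta-2$.
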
 

The assumption on the existence of $Y^*$ with $\rank$ exactly equal to $r$ ensures that the projection onto $\mathcal{B}$ is single-valued, since $\mathcal{B}$ coincides with the manifold of the matrices with rank exactly $r$. Moreover, the limit point $Y$ does not necessarily coincide with $Y^*$. We also mention that global convergence can be achieved by considering a regularized version of the APM scheme~\cite{bellavia2026inexact}. We did not investigate this point further since it is out of the scope of this work. 
\subsection{Numerical experiments}
\label{sec:exp}
In this subsection, we compare Naive-RMD, applied to~\eqref{eq:feas_prob}, with Gurobi (version 13), called through the YALMIP toolbox in MATLAB and applied to~\eqref{eq:identity_relu_qcqp_compact}. Our first goal is to compare the two proposed approaches in computing exact RMDs. 
Our second goal is to numerically illustrate Theorem~\ref{th:relu_id}, by showing that a rank-3 RMD of the identity matrix can be computed numerically. This problem is known to be difficult to solve using algorithms converging to a stationary point of the inexact formulations such as Fro-RMD or Latent-RMD, due to the large number of spurious minimizers (that is, local but non-global minimizers). 
We show that Gurobi, applied to~\eqref{eq:identity_relu_qcqp_compact}, finds an exact decomposition consistent with the construction used in the proof of Theorem~\ref{th:relu_id}.

Let $(\Tilde{W},\Tilde{H})$ be the solution computed by either Gurobi or Naive-RMD.
We evaluate the performance of the algorithms in terms of the relative Frobenius error defined as
\begin{equation*}
    \err=\frac{\lVert X - \max(0,\Tilde{W}\Tilde{H}) \rVert_F}{\lVert X \rVert_F}.
\end{equation*}
In all the experiments, we stop Naive-RMD if $\err \leq \epsilon$, where $\epsilon$ is a fixed tolerance, or if a maximum number of $10^6$ iterations is reached.
However, this stopping criterion cannot be used for Gurobi, which relies on constraint violation: a solution $(\Tilde{W},\Tilde{H})$ is considered feasible by the solver if the quantity $\lvert X_{i,j}- \max(0,(\Tilde{W}\Tilde{H})_{i,j}) \rvert \leq \delta$ for every $(i,j)$, with $\delta$ a fixed tolerance.
For a fair comparison, we relate the two tolerances $\epsilon$ and $\delta$ by observing that
\begin{equation*}
   \err = \frac{1}{\lVert X\rVert_F}\sqrt{\sum_{i,j}( X_{i,j}- \max(0,(\Tilde{W}\Tilde{H})_{i,j}) )^2} \leq \frac{\sqrt{mn}\delta}{\lVert X\rVert_F}.
\end{equation*}
Thus, considering that Gurobi allows for a tolerance value in $[10^{-9},10^{-2}]$, we choose \newline \mbox{$\delta=\min\left(\max\left(\frac{\epsilon \lVert X\rVert_F}{\sqrt{mn}},10^{-9}\right),10^{-2}\right)$}. Therefore, we ensure that both algorithms are stopped when they reach at least an accuracy of $\epsilon$ on $\err$. The tolerance $\epsilon$ is set to $\epsilon=10^{-9}$. 

All the algorithms are implemented in MATLAB R2021b on a 64-bit Samsung/Galaxy with 11th Gen Intel(R) Core(TM) i5-1135G7 @ 2.40GHz and 8 GB of RAM, under Windows 11 version 23H2.

\subsubsection{Random low-rank matrices}

We consider randomly generated matrices having a fixed $\rankrel$ equal to $r$. In particular, we sample the ground truth factors $W^* \in \mathbb{R}^{m \times r}$ and $H^* \in \mathbb{R}^{r \times n}$ from a uniform distribution in $[-0.5,0.5]$, that is, \texttt{W$^*$ = rand(m,r)-0.5} and \texttt{H$^*$ = rand(r,n)-0.5} in MATLAB, and we compute $X^*=\max(0,W^* H^*)$. We set the bound on the entries of the factor $W$ and $H$ for the feasibility problem formulation in~\eqref{eq:identity_relu_qcqp_compact} to $\Bar{M}=0.5$, since we know that at least one solution in that range exists. We choose different dimensions $m$ and $n$, while choosing $r \in \{5,10,15\}$. Then, we assume $W^*$ and $H^*$ are unknown, and we compare Naive-RMD and Gurobi in recovering an exact RMD of $X^*$. The Naive-RMD algorithm is randomly initialized by taking $\Theta^0=W^0 H^0$, where $W^0$ and $H^0$ are generated following the same strategy used to generate $W^*$ and $H^*$. 
We also set a time limit of 300 seconds, after which the execution of the methods is stopped. We show the average time needed to compute each decomposition for different problem dimensions in Table \ref{tab:Naive_Gurobi_comp}. If the solver does not find any solution in the allotted time, we report it as a failure.

\begin{table}[ht]
    \centering
   \begin{tabular}{cc cc cc}
    \toprule
    \multirow{2}{*}{Size} & \multirow{2}{*}{Rank} & \multicolumn{2}{c}{Gurobi} & \multicolumn{2}{c}{Naive-RMD} \\
    \cmidrule(lr){3-4} \cmidrule(lr){5-6}
     & & Time (s) & Fails & Time (s) & Fails \\
      \midrule
    $25 \times 20$ & 5 & 0.22 & 0 & 0.56 & 0 \\
    $30 \times 25$ & 5 & 18.93 & 0 & 0.21 & 0 \\
    $40 \times 35$ & 5 & 52.33 & 1 & 0.20 & 0 \\
    $50 \times 45$ & 5 & 2.78 & 0 & 0.21 & 0 \\
    $60 \times 50$ & 5 & 4.33 & 0 & 0.19 & 0 \\
     $70 \times 60$ & 5 & 8.64 & 0 & 0.24 & 0 \\
    \midrule
    $25 \times 20$ & 10 & 0.34 & 2 & 0.13 & 0 \\
    $30 \times 25$ & 10 & 59.70 & 5 & 3.21 & 1 \\
    $40 \times 35$ & 10 & 8.32 & 0 & 6.68 & 1 \\
    $50 \times 45$ & 10 & 17.37 & 0 & 0.46 & 0 \\
    $60 \times 50$ & 10 & 22.92 & 0 & 0.32 & 0 \\
     $70 \times 60$ & 10 & 33.66 & 3 & 0.30 & 0 \\
     \midrule
    $25 \times 20$ & 15 & 0.47 & 8 & 0.01 & 0 \\
    $30 \times 25$ & 15 & 4.29 & 0 & 0.03 & 0 \\
    $40 \times 35$ & 15 & 15.78 & 0 & 0.40 & 0 \\
    $50 \times 45$ & 15 & 28.00 & 0 & 6.91 & 0 \\
    $60 \times 50$ & 15 & 41.02 & 1 & 30.66 & 0 \\ 
     $70 \times 60$ & 15 & 38.39 & 8 & 1.00 & 0 \\
    \bottomrule
\end{tabular}
    \caption{Average running time performed on the successful runs to compute the exact RMD over 10 randomly generated matrices admitting an exact RMD of rank $r \in \{5, 10, 15\}$. The column "Fails" shows the number of runs for which each algorithm could not find a solution within 300 seconds. The time is in seconds, and it refers to the CPU time used by the Gurobi solver, while the time needed to build the model by the YALMIP model is not considered in this table.}
    \label{tab:Naive_Gurobi_comp}
\end{table}
Table~\ref{tab:Naive_Gurobi_comp} shows that Naive-RMD is significantly faster than the Gurobi solver in 14 out of the 15 analyzed cases. Moreover, the Gurobi solver fails 8 times when $r=15$ and the dimensions are $25 \times 20$ and $70 \times 60$. On the other hand, Naive-RMD fails just two times in al the experiments considered. We also observe that increasing the dimensions of the matrix does not automatically imply that the problem becomes harder. In fact, finding a rank-10 RMD of a $50 \times 45$ or a $60 \times 50$ matrix seems to be easier than the case with dimensions $25 \times 20$ and $30 \times 25$ in the setting we consider. 
These results for Naive-RMD seem to indicate that randomly generated instances have a benign landscape in many cases. 
The computational experiments in~\cite{gillis2025extrapolated} also support this conjecture, as all synthetically generated matrices could be factorized exactly with iterative algorithms.   
This is a topic for further research.

\subsubsection{Identity matrix} We now aim to compute an exact RMD of the identity matrix $I_n$ of size $n \times n$. 
We know from Theorem~\ref{th:relu_id} that, independently of $n$, there exists a rank-3 RMD of the identity matrix. Nevertheless, computing the decomposition using numerical methods is not trivial, and many of the methods in the literature fail to achieve this task. In what follows, we fix $n=10$, and we solve the problem in~\eqref{eq:identity_relu_qcqp_compact} with $X=I_n$. First, we run Naive-RMD with 1000 different random initializations $\Theta^0=W^0 H^0$, where $W^0$ and $H^0$ are sampled from a Gaussian distribution.
As in the previous experiment, we set a maximum iteration number of $10^5$. The average relative error achieved by the Naive-RMD solution is $1.76 \cdot 10^{-1}$ and none of the initializations yields an error below $10^{-3}$. The Naive-RMD algorithm fails to find an exact $\rank$-3 solution of the 10-by-10 identity matrix, when randomly initialized. 

 We now test the Gurobi solver applied to~\eqref{eq:identity_relu_qcqp_compact}, where the bound on the entries of the factors $W$ and $H$ is set to $\Bar{M}=10$. The solver finds a solution $(\Hat{W},\Hat{H})$ with a relative error of the order of $10^{-13}$. Following the points-hyperplanes interpretation given in Section~\ref{sec:exact_NMD}, Figure~\ref{fig:gurobi_id} shows, in a three-dimensional space, the points corresponding to the rows of $\Hat{W}$ and two hyperplanes whose normal vectors correspond to two columns of $\Hat{H}$. 
\begin{figure}[h]
    \centering
    \includegraphics[width=0.7\linewidth]{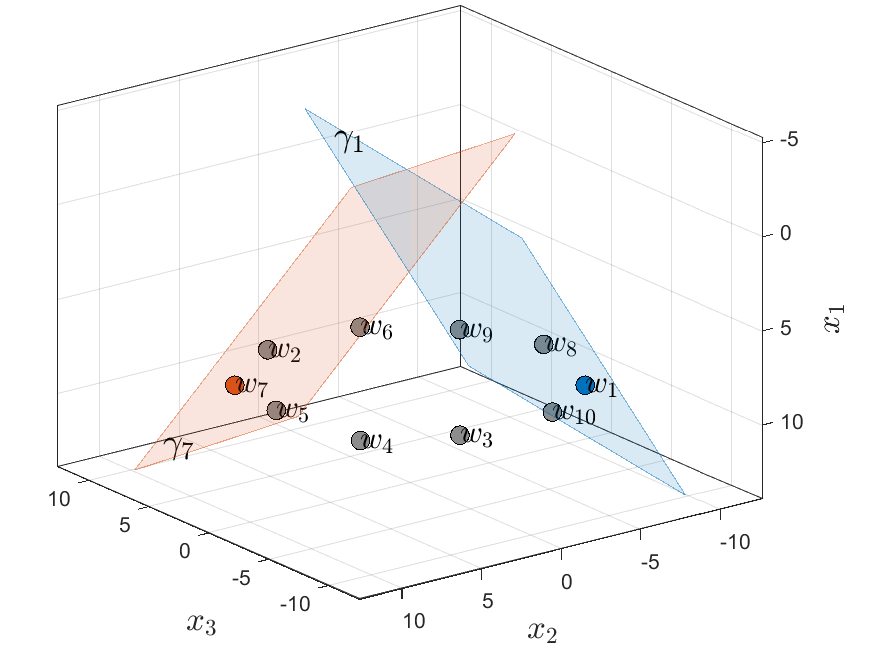}
    \caption{Illustration of the rank-3 RMD $\Hat{W}\Hat{H}$ of $I_{10}$ obtained using Gurobi. We display the rows of $\Hat{W}$ as the points $w_i$'s, and the hyperplanes $\gamma_1$ and $\gamma_7$ whose normal vectors are the first and seventh columns of $\Hat{H}$, resp.; see the geometric interpretation of exact RMD in Section~\ref{sec:exact_NMD}. In fact, each hyperplane $\gamma_i$ for $i=1,\dots,10$ separates the points $w_i$ from all the others, $w_j$ with $j \neq i$, in order to recover the sign pattern of the corresponding $i$-th column of $I_{10}$.} 
    \label{fig:gurobi_id}
\end{figure}
Interestingly, the rows of $\Hat{W}$ in the Gurobi solution represent (approximately) coplanar points lying on a circle, which reflects the geometric construction used in~\cite{delsarte1989low} and in~\cite{saul2022nonlinear} to prove that $\ranksg(I_n)$ and $\rankrel(I_n)$ are smaller than 3, respectively. According to the geometric interpretation of RMD, the columns of $\Hat{H}$ are normal vectors to hyperplanes that separate each point from all the others; thus, they have a positive scalar product with only one point  (diagonal element) and a negative one with all the others (off diagonal elements). 

Contrary to the exact Gurobi solver, the solution computed by Naive-RMD depends on the starting point of the algorithm, see Theorem~\ref{th:con_Naive_NMD}. We verify the local behavior of Naive-RMD by considering, as a starting point, the exact Gurobi solution $\Hat{W}\Hat{H}$ where we increasingly flip the signs of the entries of the matrix. According to the geometric representation of RMD, flipping the sign in the exact solution means moving the points-hyperplanes representation by randomly switching the scalar product between points and the normal vector to hyperplanes separating them. Therefore, the more signs are flipped, the further the starting point is from the solution. In Figure~\ref{fig:id_Naive_noise}, we evaluate the final relative error for an increasing  number of entries with a flipped sign. The results are averaged over 10 starting points, where the entries with flipped signs are randomly selected. 
\begin{figure}[h]
    \centering
    \includegraphics[width=0.5\linewidth]{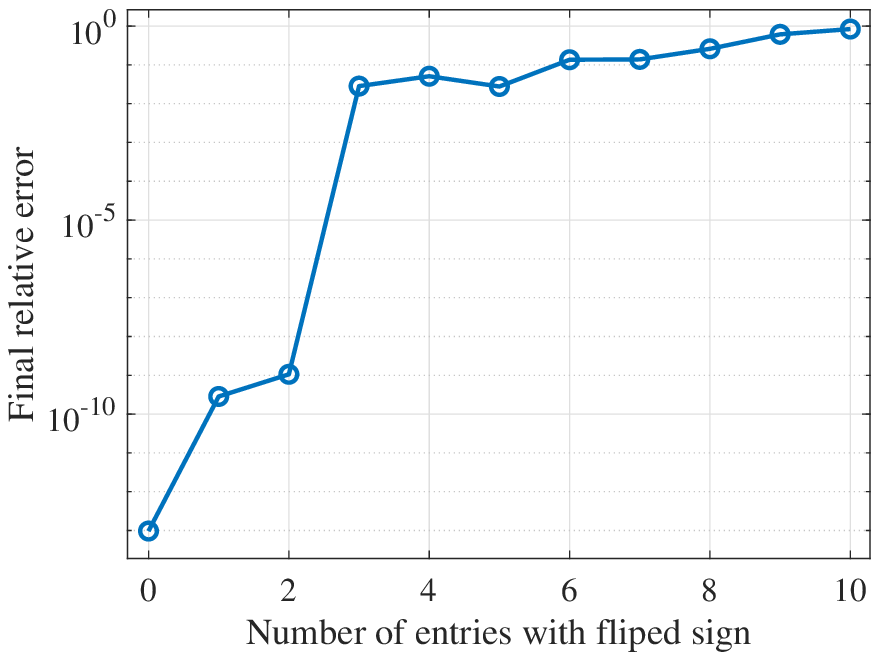}
    \caption{Average final relative error of the solution computed using Naive-RMD with starting point obtained from the exact Gurobi solution  $\Hat{\Theta}=\Hat{W}\Hat{H}$ in which the sign of an increasing number entries are flipped. The entries in $\Hat{\Theta}$ presenting a flipped sign are randomly selected at each run.} 
    \label{fig:id_Naive_noise}
\end{figure}
As expected, if the initial point is not too far from an optimal solution, that is, if fewer than three entries are flipped, Naive-RMD is still able to find an accurate solution.
In contrast, with three or more flipped entries, Naive-RMD does not converge to an exact solution, and its final relative error exceeds $10^{-2}$.

\section{Conclusion}

In this paper, we addressed several open questions in the context of RMD. First, we proved the NP-hardness and ill-posedness of Fro-RMD, which is a standard formulation used to compute approximate RMDs. Then, we considered exact RMD, introducing the notion of $\rankrel$, and providing a geometric interpretation as a points-hyperplanes system. We used this geometric intuition to give upper bounds on the $\rankrel$ of special classes of matrices. Finally, we proposed two new formulations of exact RMD as a feasibility problem and showed that the Naive-RMD algorithm by Saul can be employed to solve one of the two. We established local convergence guarantees for the Naive-RMD algorithm when applied to exact RMD.

\paragraph{Use of AI assistance} During the preparation of this manuscript, the authors used Claude (Anthropic) to improve the presentation of several proofs and to check the clarity and logical consistency of mathematical arguments. The AI system was used solely as a writing and reasoning aid, and the key ideas behind this work are not AI generated. All mathematical results, proofs, and conclusions were independently verified by the authors, who take full responsibility for the content of this manuscript.

\bibliographystyle{ieeetr}
\bibliography{bib.bib}

@article{delsarte1989low,
  title={Low rank matrices with a given sign pattern},
  author={Delsarte, Philippe and Kamp, Yves},
  journal={SIAM Journal on Discrete Mathematics},
  volume={2},
  number={1},
  pages={51--63},
  year={1989},
  publisher={SIAM}
}

@article{saul2022nonlinear,
  title={A nonlinear matrix decomposition for mining the zeros of sparse data},
  author={Saul, Lawrence K},
  journal={SIAM Journal on Mathematics of Data Science},
  volume={4},
  number={2},
  pages={431--463},
  year={2022},
  publisher={SIAM}
}

@inproceedings{Thayse2026,
  title={Sigmoid Nonlinear Matrix Decomposition with Cross Entropy},
  author={Thayse, Alex and Vandaele, Arnaud and Gillis, Nicolas},
  booktitle={European Signal Processing Conference},
  year={2026} 
}

@inproceedings{lefebvre2024component,
  title={Component-wise squared factorization},
  author={Lefebvre, Joakim and Vandaele, Arnaud and Gillis, Nicolas},
  booktitle={34th International Workshop on Machine Learning for Signal Processing (MLSP)},
  pages={1--6},
  year={2024},
  organization={IEEE}
}

@article{saul2022geometrical,
  title={A geometrical connection between sparse and low-rank matrices and its application to manifold learning},
  author={Saul, Lawrence K},
  journal={Transactions on Machine Learning Research},
  year={2022}
}

@inproceedings{seraghiti2023accelerated,
  title={{Accelerated Algorithms For Nonlinear Matrix Decomposition With The ReLU Function}},
  author={Seraghiti, Giovanni and Awari, Atharva and Vandaele, Arnaud and Porcelli, Margherita and Gillis, Nicolas},
  booktitle={33rd International Workshop on Machine Learning for Signal Processing (MLSP)},
  pages={1--6},
  year={2023},
  organization={IEEE}
}

@inproceedings{naik2022truncated,
  title={{Truncated Matrix Completion-An Empirical Study}},
  author={Naik, Rishhabh and Trivedi, Nisarg and Tarzanagh, Davoud Ataee and Balzano, Laura},
  booktitle={30th European Signal Processing Conference (EUSIPCO)},
  pages={847--851},
  year={2022},
  organization={IEEE}
}

@inproceedings{mazumdar2019learning,
  title={Learning and recovery in the {ReLU} model},
  author={Mazumdar, Arya and Rawat, Ankit Singh},
  booktitle={57th Annual Allerton Conference on Communication, Control, and Computing (Allerton)},
  pages={108--115},
  year={2019},
  organization={IEEE}
}

@inproceedings{goel2017reliably,
  title={Reliably learning the {ReLU} in polynomial time},
  author={Goel, Surbhi and Kanade, Varun and Klivans, Adam and Thaler, Justin},
  booktitle={Conference on Learning Theory},
  pages={1004--1042},
  year={2017},
  organization={PMLR}
}

@article{wang2024momentum,
  title={{A Momentum Accelerated Algorithm for ReLU-based Nonlinear Matrix Decomposition}},
   author={Wang, Qingsong and Cui, Chunfeng and Han, Deren},
  journal={IEEE Signal Processing Letters},
  volume={31},
  pages={2865--2869},
  year={2024},
  publisher={IEEE}
}

@article{liu2024symmetric,
  title={{Symmetric Matrix Completion with ReLU Sampling}},
  author={Liu, Huikang and Wang, Peng and Huang, Longxiu and Qu, Qing and Balzano, Laura},
  journal={arXiv preprint arXiv:2406.05822},
  year={2024}
}

@article{ciaperoni2024hadamard,
  title={The {H}adamard decomposition problem},
  author={Ciaperoni, Martino and Gionis, Aristides and Mannila, Heikki},
  journal={Data Mining and Knowledge Discovery},
  pages={1--42},
  year={2024},
  publisher={Springer}
}

@article{gillis2018complexity,
  title={On the complexity of robust {PCA} and $\ell_1$-norm low-rank matrix approximation},
  author={Gillis, Nicolas and Vavasis, Stephen A},
  journal={Mathematics of Operations Research},
  volume={43},
  number={4},
  pages={1072--1084},
  year={2018},
  publisher={INFORMS}
}

@inproceedings{hatami2022lower,
  title={Lower bound methods for sign-rank and their limitations},
  author={Hatami, Hamed and Hatami, Pooya and Pires, William and Tao, Ran and Zhao, Rosie},
  booktitle={Approximation, Randomization, and Combinatorial Optimization. Algorithms and Techniques (APPROX/RANDOM)},
  pages={22--1},
  year={2022},
  organization={Schloss Dagstuhl--Leibniz-Zentrum f{\"u}r Informatik}
}

@article{gillis2025extrapolated,
  title={An extrapolated and provably convergent algorithm for nonlinear matrix decomposition with the {ReLU} function},
  author={Gillis, Nicolas and Porcelli, Margherita and Seraghiti, Giovanni},
  journal={SIAM Journal on Mathematics of Data Science},
  volume={8},
  number={3},
  pages={738--765},
  year={2026},
  publisher={SIAM}
}

@article{gillis2026manifold,
  title={Manifold-based {A}lgorithms for the {H}adamard {D}ecomposition},
  author={Gillis, Nicolas and Saha, Subhayan and Sicilia, Stefano and Vandaele, Arnaud},
  journal={arXiv preprint arXiv:2605.28980},
  year={2026}
}

@article{awari2025alternating,
  title={{A}lternating {D}irection {M}ethod of {M}ultipliers for {N}onlinear {M}atrix {D}ecompositions},
  author={Awari, Atharva and Gillis, Nicolas and Vandaele, Arnaud},
  journal={arXiv preprint arXiv:2512.17473},
  year={2025}
}

@inproceedings{alon2016sign,
  title={Sign rank versus {VC} dimension},
  author={Alon, Noga and Moran, Shay and Yehudayoff, Amir},
  booktitle={Conference on Learning Theory},
  pages={47--80},
  year={2016},
  organization={PMLR}
}

@article{paturi1986probabilistic,
  title={Probabilistic communication complexity},
  author={Paturi, Ramamohan and Simon, Janos},
  journal={Journal of Computer and System Sciences},
  volume={33},
  number={1},
  pages={106--123},
  year={1986},
  publisher={Elsevier}
}

@article{bauschke1996projection,
  title={On projection algorithms for solving convex feasibility problems},
  author={Bauschke, Heinz H and Borwein, Jonathan M},
  journal={SIAM review},
  volume={38},
  number={3},
  pages={367--426},
  year={1996},
  publisher={SIAM}
}

@article{bellavia2026inexact,
  title={An inexact alternating projection method with application to matrix completion},
  author={Bellavia, Stefania and Rebegoldi, Simone and Silei, Mattia},
  journal={Computational Optimization and Applications},
  pages={1--46},
  year={2026},
  publisher={Springer}
}

@inproceedings{awari2024coordinate,
  title={Coordinate descent algorithm for nonlinear matrix decomposition with the {ReLU} function},
  author={Awari, Atharva and Nguyen, Harrison and Wertz, Samuel and Vandaele, Arnaud and Gillis, Nicolas},
  booktitle={32nd European Signal Processing Conference (EUSIPCO)},
  pages={2622--2626},
  year={2024},
  organization={IEEE}
}

@article{wang2025accelerated,
  title={An accelerated alternating partial {B}regman algorithm for {ReLU}-based matrix decomposition},
  author={Wang, Qingsong and Qu, Yunfei and Cui, Chunfeng and Han, Deren},
  journal={Journal of Scientific Computing},
  volume={105},
  number={2},
  pages={48},
  year={2025},
  publisher={Springer}
}

@inproceedings{alon1985geometrical,
  title={Geometrical realization of set systems and probabilistic communication complexity},
  author={Alon, Noga and Frankl, Peter and Rodl, Vojtech},
  booktitle={26th Annual Symposium on Foundations of Computer Science},
  pages={277--280},
  year={1985},
  organization={IEEE}
}

@article{ben2002limitations,
  title={Limitations of learning via embeddings in {E}uclidean half spaces},
  author={Ben-David, Shai and Eiron, Nadav and Simon, Hans Ulrich},
  journal={Journal of Machine Learning Research},
  volume={3},
  number={Nov},
  pages={441--461},
  year={2002}
}

@article{klivans2007unconditional,
  title={Unconditional lower bounds for learning intersections of halfspaces},
  author={Klivans, Adam R and Sherstov, Alexander A},
  journal={Machine Learning},
  volume={69},
  number={2},
  pages={97--114},
  year={2007},
  publisher={Springer}
}

@inproceedings{feldman2017general,
  title={A general characterization of the statistical query complexity},
  author={Feldman, Vitaly},
  booktitle={Conference on learning theory},
  pages={785--830},
  year={2017},
  organization={PMLR}
}

@article{chattopadhyay2018separation,
  title={Separation of unbounded-error models in multi-party communication complexity},
  author={Chattopadhyay, Arkadev and Mande, Nikhil S},
  journal={Theory of Computing},
  volume={14},
  number={1},
  pages={1--23},
  year={2018},
  publisher={Theory of Computing Exchange}
}

@article{sherstov2011unbounded,
  title={The unbounded-error communication complexity of symmetric functions},
  author={Sherstov, Alexander A},
  journal={Combinatorica},
  volume={31},
  number={5},
  pages={583--614},
  year={2011},
  publisher={Springer}
}

@article{fox2010overlap,
  title={Overlap properties of geometric expanders},
  author={Fox, Jacob and Gromov, Mikhail and Lafforgue, Vincent and Naor, Assaf and Pach, J{\'a}nos},
  journal={arXiv preprint arXiv:1005.1392},
  year={2010}
}

@article{saul2026subzero,
  title={Subzero matrix completion for sparse data analysis: large-scale learning of latent low-rank structure},
  author={Saul, Lawrence K and Huang, Ningyuan and Bollweg, Dennis and Soules, Jeff and Halikias, Diana C},
  journal={arXiv preprint arXiv:2608.21607},
  year={2026}
}

@article{drusvyatskiy2015transversality,
  title={Transversality and alternating projections for nonconvex sets},
  author={Drusvyatskiy, Dmitriy and Ioffe, Alexander D and Lewis, Adrian S},
  journal={Foundations of Computational Mathematics},
  volume={15},
  number={6},
  pages={1637--1651},
  year={2015},
  publisher={Springer}
}

@article{lewis2008alternating,
  title={Alternating projections on manifolds},
  author={Lewis, Adrian S and Malick, J{\'e}r{\^o}me},
  journal={Mathematics of Operations Research},
  volume={33},
  number={1},
  pages={216--234},
  year={2008},
  publisher={INFORMS}
}

@article{lewis2009local,
  title={Local linear convergence for alternating and averaged nonconvex projections},
  author={Lewis, Adrian S and Luke, D Russell and Malick, J{\'e}r{\^o}me},
  journal={Foundations of Computational Mathematics},
  volume={9},
  number={4},
  pages={485--513},
  year={2009},
  publisher={Springer}
}

@article{gillis2026complexity,
  title={{On the Complexity of Entrywise Power Matrix Factorization}},
  author={Gillis, Nicolas and Saha, Subhayan and Sicilia, Stefano and Vandaele, Arnaud},
  journal={arXiv preprint arXiv:2607.04875},
  year={2026}
}

@inproceedings{Nguyen,
  author={Nguyen, Harrison and Awari, Atharva and Vandaele, Arnaud and Gillis, Nicolas},
  booktitle={35th International Workshop on Machine Learning for Signal Processing (MLSP)}, 
  title={Nonlinear Matrix Decomposition with the Sigmoid Function}, 
  year={2025},
  volume={},
  number={},
  pages={1-6},
  doi={10.1109/MLSP62443.2025.11204338}}

@article{droge2023kissing,
  title={Kissing to find a match: Efficient low-rank permutation representation},
  author={Dr{\"o}ge, Hannah and L{\"a}hner, Zorah and Bahat, Yuval and Martorell Nadal, Onofre and Heide, Felix and M{\"o}ller, Michael},
  journal={Advances in Neural Information Processing Systems},
  volume={36},
  pages={48459--48471},
  year={2023}
}
\end{document}